\newtheorem{definition}{Definition}[section]
\newtheorem{conjecture}[definition]{Conjecture}
\newtheorem{lemma}[definition]{Lemma}
\newtheorem{proposition}[definition]{Proposition}
\newtheorem{theorem}[definition]{Theorem}
\title{On a conjecture of Braverman and Kazhdan}
\author{S. Cheng, B. C. Ng\^o}
\date{}
\begin{document}
\maketitle

\begin{abstract}
In this paper a proof of Conjecture 9.12 of Braverman--Kazhdan in their article \emph{$\gamma$-functions of representations and lifting} on the acyclicity of their $\ell$-adic $\gamma$-sheaves over certain affine spaces is given for $\mathrm{GL}(n)$.
\end{abstract}

\section*{Introduction}
\addcontentsline{toc}{section}{Introduction}

Classical Fourier transforms on vector spaces over local fields and adelic rings have found remarkable connections with the standard $L$-functions $L(s,\pi,\mathrm{std})$ of $\mathrm{GL}(n)$ since Tate \cite{T50} for $n=1$ and Godement--Jacquet \cite{GJ72} for general $n$.

More generally, for each reductive group $G$ which is quasi-split over a nonarchimedean local field $F_v$ and each representation $\rho$ of its dual group $^LG$ satisfying some mild technical conditions, there exists a $\rho$-analogue of the Fourier transform which is essentially the operator of convolution by an invariant distribution $\varPhi_{\psi,\rho}$ on $G(F_v)$ where $\psi$ is a fixed additive character of $F_v$, whose operator-valued Mellin transform $\mathcal{M}(\varPhi_{\psi,\rho}*(\bullet))$ is the scalar operator of multiplication by the $\gamma$-function $\gamma_{\psi,\rho}({\pi_v})$ investigated by Braverman--Kazhdan in \cite{BK00}. Similar ideas have been developed further by L. Lafforgue, see for example \cite{L13}.

Incarnations $\Phi_{\psi,\rho}$ of $\varPhi_{\psi,\rho}$ as $\ell$-adic perverse sheaves over finite fields have been constructed and studied by Braverman--Kazhdan in the last section of \cite{BK00} and subsequently in \cite{BK02}. The purpose of this paper is to establish Conjecture 9.12 in \cite{BK00} for $\mathrm{GL}(n)$. The argument generalizes that of Braverman--Kazhdan in \cite{BK02} for $\mathrm{GL}(2)$. Following the classical paradigm the generalization from $\mathrm{GL}(2)$ to $\mathrm{GL}(n)$ involves mirabolic groups as an essential ingredient.

\paragraph*{Conventions}
In this paper $k$ is an algebraic closure of a finite field $k_0$ with $q$ elements of characteristic $p$. Let $\ell$ be a prime number which is distinct from $p$, let $\overline{\mathbb{Q}}_\ell$ be an algebraic closure of the field of $\ell$-adic numbers.

If $X$ is a $k$-scheme, let $D_c^b(X)$ denote the derived category of complexes of $\overline{\mathbb{Q}}_\ell$-\'{e}tale sheaves on $X$ with bounded constructible cohomology, let $[d]$ denote the $d$th translation functor on $D_c^b(X)$. If $f$ is a $k$-linear morphism of $k$-schemes, the six functors $f^*$, $f_*$, $f_!$, $f^!$, $\otimes_X$ and $\mathcal{H}om_X$ are understood in the derived sense. If $j$ is the morphism of inclusion of an open $k$-subscheme, let $j_{!*}$ denote the intermediate extension functor of Goresky--MacPherson for $\overline{\mathbb{Q}}_\ell$-perverse sheaves (see \cite{BBD82}).

We will denote by $\mathbb{G}_a$ the additive group defined over $k$. It has the Artin--Schreier covering, which is a torsor under the finite group $k_0$, given by the Lang isogeny $L_{\mathbb{G}_a}:\mathbb{G}_a\to \mathbb{G}_a$ with $L_{\mathbb{G}_a}(t)= t^q-t$. We fix a nontrivial character $\psi:k_0\to \overline{\mathbb{Q}}_\ell^\times$ and denote by $\mathcal{L}_\psi$ the rank one $\overline{\mathbb{Q}}_\ell$-local system attached to $\psi$ obtained by pushing out the Artin--Schreier covering.

Similarly, we denote by $\mathcal{L}_c$ the the rank one $\overline{\mathbb{Q}}_\ell$-local system on the multiplicative group $\mathbb{G}_m$ attached to a multiplicative character $c:k_0^\times\to\overline{\mathbb{Q}}_\ell^\times$ obtained by pushing out the Kummer covering $L_{\mathbb{G}_m}:\mathbb{G}_m\to \mathbb{G}_m$ with $L_{\mathbb{G}_m}(t)=t^{q-1}$.

\tableofcontents

\section{Katz's hypergeometric sheaves}\label{section hypergeometric sheaves}

Let $T$ be a torus defined over $k$ and $\lambda:\mathbb{G}_m \to T$ a nontrivial cocharacter. We note that $\lambda$ is then necessarily a finite morphism so that we have $$
	\Psi(\lambda)=\lambda_! ( j^*\mathcal{L}_\psi[1])=\lambda_* (j^*\mathcal{L}_\psi[1]).
$$ where $j:\mathbb{G}_m\to \mathbb{G}_a$ is the inclusion morphism from the multiplicative group into the additive group. We will call $\Psi(\lambda)$ the hypergeometric sheaf attached to $\lambda:\mathbb{G}_m\to T$. They are perverse sheaves on $T$.

Following Katz \cite[Chapter 8]{K90}, we construct general hypergeometric sheaves on $T$ by convolving the $\Psi(\lambda)$. We recall that convolution products on $T$ are constructed as direct images with respect to the multiplication morphism $\mu:T\times T \to T$ with $\mu(t_1,t_2)=t_1 t_2$. There are two convolution products attached to the direct image functors with or without the compact support condition: for every $\mathcal{F},\mathcal{G}\in D_c^b(T)$ we define
\begin{eqnarray*}
	\mathcal{F}\star\mathcal{G} &=& \mu_! (\mathcal{F}\boxtimes \mathcal{G})\\
	\mathcal{F}*\mathcal{G} &=& \mu_*(\mathcal{F}\boxtimes \mathcal{G})
\end{eqnarray*}
related by the morphism of functors $$
	\mathcal{F}\star\mathcal{G} \to \mathcal{F}*\mathcal{G}
$$ that consists in forgetting the compact support condition. For every collection of possibly repeated nontrivial cocharacters $\underline\lambda=(\lambda_1,\ldots,\lambda_r)$, we consider the convolution products 
\begin{eqnarray*}
	\Psi_{\underline\lambda}&=&\Psi(\lambda_1) \star \dots \star \Psi(\lambda_r) \\
	\Psi^*_{\underline\lambda}&=&\Psi(\lambda_1) * \dots * \Psi(\lambda_r)
\end{eqnarray*}
and the forget support morphism
\begin{equation} \label{forget-support}
    \Psi_{\underline\lambda} \to \Psi^*_{\underline\lambda}.
\end{equation}

If we denote by ${\mathrm p}_{\underline\lambda}:\mathbb{G}_m^r \to T$ the homomorphism given by $$
	{\mathrm p}_{\underline\lambda}(t_1,\ldots,t_r)=\prod_{i=1}^r \lambda_i(t_i)
$$ and by $\mathrm{tr}:\mathbb{G}_m^r \to \mathbb{G}_a$ the addition morphism $\mathrm{tr}(t_1,\ldots,t_r)=\sum_{i=1}^r t_i$, then
\begin{eqnarray*}
	\Psi_{\underline\lambda} & = & {\mathrm p}_{\underline\lambda,!} \mathrm{tr}^* \mathcal{L}_\psi [r]\\
	\Psi^*_{\underline\lambda} & = & {\mathrm p}_{\underline\lambda,*} \mathrm{tr}^* \mathcal{L}_\psi [r].
\end{eqnarray*}
For $T=\mathbb{G}_m$ and $\lambda_i:\mathbb{G}_m\to \mathbb{G}_m$ being the identity for all $i$, $\Psi_{\underline\lambda}$ is the $r$-fold Kloosterman sheaf considered by Deligne in \cite{D77s}. In general, this is what Braverman and Kazhdan have called $\gamma$-sheaves on tori in \cite{BK02}.

We will restrict ourselves in a setting where the morphism \eqref{forget-support} is an isomorphism. Let $\sigma:T\to \mathbb{G}_m$ be a character. A cocharacter $\lambda:\mathbb{G}_m\to T$ is said to be $\sigma$-positive if the composition $\sigma\circ\lambda:\mathbb{G}_m\to\mathbb{G}_m$ is of the form $t\mapsto t^n$ where $n$ is a positive integer. 

\begin{proposition}\label{positive-hypergeometric-sheaves}
	Assume that $\lambda_1,\ldots,\lambda_n$ are $\sigma$-positive. Then the forget-support morphism $\Psi_{\underline\lambda} \to \Psi^*_{\underline\lambda}$ is an isomorphism. Moreover, $\Psi_{\underline\lambda}$ is a perverse local system over the image of ${\mathrm p}_{\underline\lambda}$, which is a subtorus of $T$.
\end{proposition}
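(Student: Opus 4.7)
The plan is to factor $\mathrm{p}_{\underline\lambda}$ through its image $T'\subset T$, reducing to a surjective morphism $\pi:\mathbb{G}_m^r\twoheadrightarrow T'$, and then to establish both the forget-support isomorphism and the perverse local system property by a fiberwise Artin--Schreier cohomology analysis controlled by $\sigma$-positivity.

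The image $T':=\mathrm{p}_{\underline\lambda}(\mathbb{G}_m^r)$ is a subtorus of $T$, since any homomorphism of tori has subtorus image. Factor $\mathrm{p}_{\underline\lambda}=\iota\circ\pi$ with $\pi$ faithfully flat surjective and $\iota:T'\hookrightarrow T$ a closed immersion. Since $\iota_!=\iota_*$, it suffices to show that $\pi_!\mathrm{tr}^*\mathcal{L}_\psi[r]\to\pi_*\mathrm{tr}^*\mathcal{L}_\psi[r]$ is an isomorphism whose common value is a perverse local system on $T'$. The morphism $\pi$ is a torsor under the diagonalizable group $K:=\ker\pi$, whose neutral component $K^\circ\subset\mathbb{G}_m^r$ is a subtorus of dimension $d:=r-\dim T'$. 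Given $\bar t\in T'(k)$ and a lift $(t_1^0,\ldots,t_r^0)\in\pi^{-1}(\bar t)$, the fiber identifies with $K$ and $\mathrm{tr}$ restricts to the Laurent polynomial $f_{\bar t}:(s_i)\mapsto\sum_i t_i^0 s_i$ on $K$.

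Next I would exploit $\sigma$-positivity. Setting $n_i:=\langle\sigma,\lambda_i\rangle>0$, the restrictions $\chi_i$ to $K^\circ$ of the coordinate characters of $\mathbb{G}_m^r$ satisfy the positive linear relation $\sum_i n_i\chi_i=0$, because $\sigma\circ\mathrm{p}_{\underline\lambda}$ vanishes on $K\supset K^\circ$. Since these $\chi_i$ also span the character group of $K^\circ$, the origin lies in the interior of the convex hull $\Delta:=\mathrm{conv}\{\chi_i\}$. Consequently, for every nontrivial cocharacter $\nu:\mathbb{G}_m\to K^\circ$ the restriction of $f_{\bar t}$ to $\nu$ is a Laurent polynomial whose exponents are of mixed sign, so the pullback of $\mathcal{L}_\psi$ is wildly ramified at both $0$ and $\infty$ of the corresponding $\mathbb{P}^1$-compactification.

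From this I would conclude that $H^*_c(K,f_{\bar t}^*\mathcal{L}_\psi)$ coincides with $H^*(K,f_{\bar t}^*\mathcal{L}_\psi)$, is concentrated in the single degree $d$, and has rank $d!\,\mathrm{Vol}(\Delta)$ independently of $\bar t$ --- either by an Adolphson--Sperber / Katz-style analysis (in the spirit of Chapter 8 of \cite{K90}) on $K^\circ$ together with an averaging over the finite component group $K/K^\circ$, or by a Leray induction along a filtration of $K^\circ$ by 1-parameter subgroups exploiting the wild-ramification vanishing at each stage. Proper base change along the smooth morphism $\pi$ then upgrades this fiberwise statement to the required perverse local system on $T'$, and pushforward by $\iota$ completes the proof. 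The main obstacle is ensuring that the cohomological dimension and the forget-support coincidence hold \emph{uniformly} in $\bar t$, especially at special points of $T'$ where $f_{\bar t}$ may degenerate with respect to $\Delta$; $\sigma$-positivity is precisely what rules out fatal degenerations by keeping $0$ strictly inside $\Delta$ for every choice of lift.
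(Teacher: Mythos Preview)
Your strategy is genuinely different from the paper's, and the gap lies exactly where you flag ``the main obstacle.'' The condition that $0$ lies in the interior of $\Delta$ does \emph{not} guarantee that $f_{\bar t}$ is nondegenerate with respect to $\Delta$ in the sense required by Adolphson--Sperber type theorems. As soon as a proper face of $\Delta$ carries three or more of the $\chi_i$, the corresponding face polynomial can acquire a torus critical point for $\bar t$ lying on a proper closed subvariety of $T'$. For a concrete instance take $T=\mathbb{G}_m^3$ with $\sigma(a,b,c)=abc$ and the five $\sigma$-positive cocharacters $(1,0,0)$, $(0,1,0)$, $(0,0,1)$, $(1,1,1)$, $(2,1,0)$: then $K^\circ\simeq\mathbb{G}_m^2$, one edge of $\Delta$ carries $\chi_1,\chi_2,\chi_3$, and nondegeneracy fails precisely on the hypersurface $\{4t_1t_3=t_2^2\}$ in $T$. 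The same cancellation defeats your Leray alternative: on cosets of a one-parameter subgroup $\nu$, the top coefficient of the restricted Laurent polynomial is a sum over those $i$ with $\langle\chi_i,\nu\rangle$ maximal, and this sum can vanish for special values of the coset parameter, so wild ramification at that end is not ensured. There is also a secondary gap: even if fiberwise concentration and constant rank held everywhere, proper base change only computes stalks and does not by itself force $R^d\pi_!$ to be locally constant.

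The paper sidesteps both problems by a relative argument. It cites \cite{BK02} for the forget-support isomorphism (proved there using the Fourier--Deligne transform on $\mathbb{G}_a^r$), and then deduces the local-system property from universal local acyclicity: one compactifies $\mathrm{p}_{\underline\lambda}$ to $\overline{\mathrm{p}}_{\underline\lambda}:\overline\Gamma\to T'$ inside $(\mathbb{P}^1)^r\times T'$ and proves that $\overline{\mathrm{p}}_{\underline\lambda}$ is universally locally acyclic for $\mathrm{j}_{\underline\lambda,!}\mathrm{tr}^*\mathcal{L}_\psi$. The trick, following Katz--Laumon, is that the $!$-extension of $\mathrm{tr}^*\mathcal{L}_\psi$ to $(\mathbb{P}^1)^r$ is a direct summand of the pushforward of the constant sheaf along the completed Artin--Schreier cover, at least away from the origin; $\sigma$-positivity is used precisely to ensure that $\overline\Gamma$ avoids that origin. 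One is then reduced to universal local acyclicity for the constant sheaf along a map which, after a toric base change, becomes a product projection, and this holds uniformly over $T'$ without any nondegeneracy hypothesis.
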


\begin{proof}
See Appendix \ref{appendix-positive-hypergeometric-sheaves}.
\end{proof}

Let $\Sigma_{\underline\lambda}$ denote the subgroup of the symmetric group $\mathfrak{S}_r$ consisting of permutations $\tau\in \mathfrak{S}_r$ such that for all $i\in\{1,\ldots,r\}$, we have $\lambda_{\tau(i)}=\lambda_i$. This subgroup is of the form $\Sigma_{\underline\lambda}=\mathfrak{S}_{r_1}\times\cdots\times\mathfrak{S}_{r_m}$ where $(r_1,\ldots,r_m)$ is the partition of $r$ corresponding to positive number of occurrences in $\{\lambda_1,\ldots,\lambda_r\}$.  

\begin{proposition}
	The group $\Sigma_{\underline\lambda}$ acts on $\Psi_{\underline\lambda}$ via the sign character.
\end{proposition}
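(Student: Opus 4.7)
The plan is to unravel the action through the explicit description $\Psi_{\underline\lambda}=\mathrm{p}_{\underline\lambda,!}\mathrm{tr}^*\mathcal{L}_\psi[r]$ and reduce the claim to the Koszul sign rule for the symmetric monoidal structure on $D_c^b$. First, using the multiplicativity $\mathrm{add}^*\mathcal{L}_\psi\cong\mathcal{L}_\psi\boxtimes\mathcal{L}_\psi$ of the Artin--Schreier sheaf under addition, iterated along the factorization of $\mathrm{tr}:\mathbb{G}_m^r\to\mathbb{G}_a$ through repeated addition, I will identify $\mathrm{tr}^*\mathcal{L}_\psi\cong\mathcal{L}_\psi^{\boxtimes r}$ on $\mathbb{G}_m^r$, hence
\[
\Psi_{\underline\lambda}\;\cong\;\mathrm{p}_{\underline\lambda,!}\bigl((\mathcal{L}_\psi[1])^{\boxtimes r}\bigr).
\]

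Next, I will interpret the action. Since $\Psi_{\underline\lambda}=\Psi(\lambda_1)\star\cdots\star\Psi(\lambda_r)$, the action of $\tau\in\Sigma_{\underline\lambda}$ comes from the commutativity isomorphisms of the symmetric monoidal structure on $(D_c^b(T),\star)$, permuting the convolution factors; the condition $\lambda_{\tau(i)}=\lambda_i$ is exactly what makes the source and target of this permutation isomorphism equal. Via the base change yielding the presentation above, this structure is carried over to the external-tensor symmetric monoidal structure on $(D_c^b(\mathbb{G}_m^r),\boxtimes)$. The action of $\tau$ on $(\mathcal{L}_\psi[1])^{\boxtimes r}$ can then be decomposed as the composite of (i) the geometric pullback $\tau^*$, which is the identity since $\mathrm{tr}\circ\tau=\mathrm{tr}$ and all factors are copies of the same sheaf, and (ii) the Koszul commutativity swap reordering the permuted tensor factors back to their canonical positions. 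Step (ii) is immediate from the sign rule: each adjacent transposition of two factors $\mathcal{L}_\psi[1]$ of cohomological degree $-1$ contributes a sign $(-1)^{(-1)(-1)}=-1$, so a general $\tau$ acts by $\mathrm{sgn}(\tau)$. Since $\mathrm{p}_{\underline\lambda}\circ\tau=\mathrm{p}_{\underline\lambda}$, the functor $\mathrm{p}_{\underline\lambda,!}$ transports this scalar directly to the induced endomorphism of $\Psi_{\underline\lambda}$, which is therefore $\mathrm{sgn}(\tau)\cdot\mathrm{id}$.

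The main subtlety, and the step requiring the most care, is the compatibility used in the middle paragraph: that the convolution symmetric monoidal structure on $D_c^b(T)$ is matched, including its Koszul signs, with the external-tensor symmetric monoidal structure on $D_c^b(\mathbb{G}_m^r)$ under the base change identification. This is essentially a formal consequence of the compatibility of $(-)_!$ with external tensor products and with symmetric-monoidal commutativity, but it is precisely the step that generates the sign character on the geometric side.
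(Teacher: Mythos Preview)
The paper's own proof is a bare citation to Deligne (\cite[Proposition 7.20]{D77s}), so what you are really being compared against is Deligne's argument. Your approach via the Koszul sign rule is the right idea and is in the same spirit; the sign does ultimately come from the odd cohomological degree of $j^*\mathcal{L}_\psi[1]$. But there is a genuine slip in how you identify the action.

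The action the paper actually uses (and the one Deligne computes) is the \emph{equivariance} action: it is induced by the trivial $\tau$-structure on $\mathrm{tr}^*\mathcal{L}_\psi[r]$ coming from $\mathrm{tr}\circ\tau=\mathrm{tr}$, then pushed forward along $\mathrm{p}_{\underline\lambda}$. You instead declare that ``the action of $\tau$ comes from the commutativity isomorphisms of the symmetric monoidal structure on $(D_c^b(T),\star)$'', i.e.\ the braiding on the convolution product. These are \emph{not} a priori the same automorphism of $\Psi_{\underline\lambda}$, and your argument never checks that they agree. Concretely: under the identification $\mathrm{tr}^*\mathcal{L}_\psi[r]\cong(\mathcal{L}_\psi[1])^{\boxtimes r}$, the trivial $\tau$-structure on the left and the Koszul braiding on the right differ exactly by $\mathrm{sgn}(\tau)$ --- this is the actual content, and it is what your decomposition into (i) and (ii) is groping toward but does not establish. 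As written, your (i) says the equivariance structure is the identity and your (ii) then multiplies by an unmotivated Koszul swap; it is not clear why both steps should be composed, nor which action the composite is supposed to compute.

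Deligne's route is to push all the way to global cohomology: $\mathrm{H}^*_c(T,\Psi_{\underline\lambda})=\mathrm{H}^*_c(\mathbb{G}_m^r,\mathrm{tr}^*\mathcal{L}_\psi[r])$, which by K\"unneth is $\bigotimes_i \mathrm{H}^*_c(\mathbb{G}_m,j^*\mathcal{L}_\psi[1])$; the K\"unneth isomorphism intertwines the geometric swap with the graded (Koszul) swap on the tensor factors, and since each factor sits in a single odd degree the permutation acts by $\mathrm{sgn}(\tau)$. One then descends this scalar from global cohomology back to the perverse sheaf. Your argument contains the Koszul ingredient but skips the K\"unneth step that actually ties the geometric equivariance action to the graded tensor swap, and that is where the proof lives.
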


\begin{proof}
This is \cite[Proposition 7.20]{D77s}.
\end{proof}

\begin{proposition}
Let $\mathcal{L}$ be a Kummer local system on $T$. Then if $\lambda_1,\ldots,\lambda_n$ are $\sigma$-positive, we have $$
	\mathrm{H}^i_c(\Psi_{\underline\lambda} \star \mathcal{L})=0
$$ for $i\neq 0$ and $\dim \mathrm{H}^0_c(\Psi_{\underline\lambda} \star \mathcal{L})=1$. Moreover, there is a canonical isomorphism $$
	\Psi_{\underline\lambda} \star \mathcal{L} = \mathrm{H}^0_c(\Psi_{\underline\lambda}\star \mathcal{L}) \otimes\mathcal{L}.$$ 
 
\end{proposition}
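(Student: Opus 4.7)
My plan is to exploit the multiplicativity of the Kummer sheaf $\mathcal{L}$, namely the isomorphism $\mu^*\mathcal{L}\simeq\mathcal{L}\boxtimes\mathcal{L}$. The first step is to use the change of variables $\phi\colon T\times T\to T\times T$, $(s,t)\mapsto(s,st)$, together with the projection formula along $\mathrm{pr}_2$, to establish the identity
\begin{equation*}
\Psi_{\underline\lambda}\star\mathcal{L}=\mathrm{R}\Gamma_c(T,\Psi_{\underline\lambda}\otimes\mathcal{L}^{-1})\otimes_{\overline{\mathbb{Q}}_\ell}\mathcal{L}.
\end{equation*}
Concretely, after pulling $\Psi_{\underline\lambda}\boxtimes\mathcal{L}$ back through $\phi^{-1}$, the exterior factor $\mathcal{L}$ on the second coordinate is absorbed via multiplicativity to produce $(\Psi_{\underline\lambda}\otimes\mathcal{L}^{-1})\boxtimes\mathcal{L}$, and then $\mathrm{pr}_{2,!}$ of a sheaf of this shape is exactly $\mathrm{R}\Gamma_c(T,\Psi_{\underline\lambda}\otimes\mathcal{L}^{-1})\otimes\mathcal{L}$. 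This already exhibits $\Psi_{\underline\lambda}\star\mathcal{L}$ as a constant complex times $\mathcal{L}$ and reduces the proposition to showing that $V:=\mathrm{R}\Gamma_c(T,\Psi_{\underline\lambda}\otimes\mathcal{L}^{-1})$ is one-dimensional and concentrated in cohomological degree zero.

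To compute $V$, I would substitute $\Psi_{\underline\lambda}=\mathrm{p}_{\underline\lambda,!}\,\mathrm{tr}^*\mathcal{L}_\psi[r]$ and apply the projection formula to write
\begin{equation*}
V=\mathrm{R}\Gamma_c\bigl(\mathbb{G}_m^r,\;\mathrm{tr}^*\mathcal{L}_\psi\otimes\mathrm{p}_{\underline\lambda}^*\mathcal{L}^{-1}\bigr)[r].
\end{equation*}
The multiplicativity of $\mathcal{L}_\psi$ as a character sheaf for $\mathbb{G}_a$ gives $\mathrm{tr}^*\mathcal{L}_\psi\simeq\mathcal{L}_\psi\boxtimes\cdots\boxtimes\mathcal{L}_\psi$ on $\mathbb{G}_m^r$; similarly, $\mathrm{p}_{\underline\lambda}^*\mathcal{L}^{-1}\simeq\mathcal{L}_{c_1}^{-1}\boxtimes\cdots\boxtimes\mathcal{L}_{c_r}^{-1}$, where $\mathcal{L}_{c_i}:=\lambda_i^*\mathcal{L}$ is the Kummer sheaf on $\mathbb{G}_m$ attached to the character $c_i$. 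The K\"{u}nneth formula then factors $V[-r]$ as a tensor product of the $r$ local factors $\mathrm{R}\Gamma_c(\mathbb{G}_m,\mathcal{L}_\psi\otimes\mathcal{L}_{c_i}^{-1})$. Each such factor is one-dimensional and concentrated in degree $1$: the rank-one lisse sheaf $\mathcal{L}_\psi\otimes\mathcal{L}_{c_i}^{-1}$ is tame at $0$ and has Swan conductor $1$ at $\infty$, so Grothendieck--Ogg--Shafarevich gives $\chi_c=-1$, while the wild ramification at $\infty$ kills both $\mathrm{H}^0_c$ and $\mathrm{H}^2_c$. Tensoring the $r$ factors and absorbing the shift $[r]$ places $V$ in degree $0$ with $\dim V=1$, and identifying this coefficient with $\mathrm{H}^0_c(\Psi_{\underline\lambda}\star\mathcal{L})$ supplies the canonical isomorphism.

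The main obstacle I anticipate is the bookkeeping in the first step: one must carefully trace how the change of variables $\phi$ interacts with the exterior product, how multiplicativity of $\mathcal{L}$ is used to identify $(\phi^{-1})^*(\Psi_{\underline\lambda}\boxtimes\mathcal{L})$ with $(\Psi_{\underline\lambda}\otimes\mathcal{L}^{-1})\boxtimes\mathcal{L}$, and how pushing forward along $\mathrm{pr}_2$ then extracts the constant complex $V$. Once that identity is in hand, the remaining Euler--Poincar\'{e} computation over $\mathbb{G}_m$ is entirely classical and follows the template of Deligne's analysis of Kloosterman sheaves cited in the previous propositions.
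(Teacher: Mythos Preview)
Your argument is correct. The paper itself does not give a proof but simply cites \cite[Theorem 4.8]{BK02}, so there is no ``paper's own proof'' to compare against beyond that reference. Your approach---reducing $\Psi_{\underline\lambda}\star\mathcal{L}$ to $\mathrm{R}\Gamma_c(T,\Psi_{\underline\lambda}\otimes\mathcal{L}^{-1})\otimes\mathcal{L}$ via the shearing automorphism of $T\times T$ and the multiplicativity of $\mathcal{L}$, then unwinding with the projection formula, K\"unneth, and the one-variable Gauss-sum computation on $\mathbb{G}_m$---is the standard one and is essentially how this result is established in the Braverman--Kazhdan and Katz frameworks.

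Two small remarks. First, the bookkeeping you flagged as the main obstacle is in fact carried out correctly: with $\phi(s,t)=(s,st)$ one has $\mu=\mathrm{pr}_2\circ\phi$, and $(\phi^{-1})^*(\Psi_{\underline\lambda}\boxtimes\mathcal{L})\simeq(\Psi_{\underline\lambda}\otimes\mathcal{L}^{-1})\boxtimes\mathcal{L}$ follows from $\mu^*\mathcal{L}\simeq\mathcal{L}\boxtimes\mathcal{L}$ together with $\mathrm{inv}^*\mathcal{L}\simeq\mathcal{L}^{-1}$. Second, observe that your proof never invokes the $\sigma$-positivity hypothesis: the Gauss-sum calculation on each $\mathbb{G}_m$ factor only needs that $j^*\mathcal{L}_\psi$ is wildly ramified at $\infty$ and that $\lambda_i^*\mathcal{L}$ is tame, both of which hold for arbitrary nontrivial $\lambda_i$. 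The hypothesis is presumably stated because that is the ambient setting of the paper and of \cite{BK02}, not because it is needed for this particular proposition.
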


\begin{proof}
This is \cite[Theorem 4.8]{BK02}.
\end{proof}

\section{Braverman--Kazhdan's $\gamma$-sheaves}

Let $G$ be a reductive group over $k$.
Let $T$ be a maximal torus of $G$, $B$ a Borel subgroup containing $T$ and $U$ the unipotent radical of $B$. Let $W=\mathrm{Nor}_G(T)/T$ denote the Weyl group of $G$, $\mathrm{Nor}_G(T)$ being the normalizer of $T$ in $G$. The group of cocharacters $\Lambda=\mathrm{Hom}(\mathbb{G}_m,T)$ is a free abelian group of finite type equipped with an action of $W$. 
The complex dual group $\check G$ is equipped with a maximal torus $\check T$ and a Borel subgroup $\check B$ containing $\check T$. We have $\Lambda=\mathrm{Hom}(\check T,\mathbb{C}^\times)$. 

We will recall the construction, due to Braverman and Kazhdan, of the $\gamma$-sheaf attached to a representation of the dual group $\check G$.
Let $\rho:\check G\to \mathrm{GL}(V_\rho)$ be an $r$-dimensional representation of $\check G$. The restriction of $\rho$ to $\check T$ is diagonalizable i.e. there exists a finite set of weights $$
	\{\lambda_1,\ldots,\lambda_m\} \subset \Lambda=\mathrm{Hom}(\check T,\mathbb{C}^\times)
$$ such that there is a decomposition into direct sum of eigenspaces $$
	V_\rho=\bigoplus_{i=1}^m V_{\lambda_i},
$$ with $\check T$ acting on $V_{\lambda_i}$ by the character $\lambda_i$. The integers $r_i=\dim(V_{\lambda_i})$ define a partition $r=r_1+\cdots+r_m$.
We will denote $$
	\underline \lambda=(\underbrace{\lambda_1,\ldots,\lambda_1}_{r_1},\ldots,\underbrace{\lambda_m,\ldots,\lambda_m}_{r_m}) \in \Lambda^r
$$ where $\lambda_1,\ldots,\lambda_m$ appear in $\underline\lambda$ with multiplicity $r_1,\ldots,r_m$ respectively.

By choosing a basis $A_i=\{v_{i,j},1\leq j\leq r_i\}$ of each $V_{\lambda_i}$, we obtain a basis $$
	A=A_1 \sqcup \ldots \sqcup A_m
$$ of $V_\rho$. The Weyl group of $\mathrm{GL}(V_\rho)$ can be identified with the symmetric group ${\rm Perm}(A)=\mathfrak{S}_r$ of permutations of the finite set $A$.  
Let 
$$\Sigma_{\underline\lambda}=\mathfrak{S}_{r_1}\times \cdots \times \mathfrak{S}_{r_m}\subset \mathfrak{S}_r$$
denote the subgroup consisting of $\tau\in{\rm Perm}(A)$ such that $\tau(A_i)=A_i$. 

Let $\Sigma'_{\underline\lambda}$ denote the subgroup of ${\rm Perm}(A)$ consisting of permutations $\tau$ such that there exists a permutation $\xi\in\mathfrak{S}_r$ such that 
$\tau(A_i)=A_{\xi(i)}$ for all $i\in \{1,\ldots,m\}$. The application $\tau\mapsto \xi$ defines a homomorphism  $\Sigma'_{\underline\lambda}\to \mathfrak{S}_m$ whose kernel is $\Sigma_{\underline\lambda}$. Its image consists of permutations $\xi\in \mathfrak{S}_m$ preserving the function $i\mapsto r_i$. 

The Weyl group $W$ operates on $\Lambda$ and its action preserves the subset $\{\lambda_1,\ldots,\lambda_m\}$ of $\Lambda$. It induces a homomorphism $W\to \mathfrak{S}_m$. Its image is contained in the subgroup of $\mathfrak{S}_m$ of permutations preserving the function $i\mapsto r_i$ so that there is a canonical homomorphism $$
	\rho_W:W\to \Sigma'_{\underline\lambda}/\Sigma_{\underline\lambda}.
$$ We derive an extension $W'$ of $W$ by $\Sigma_{\underline\lambda}$ fitting into the diagram
$$\begin{tikzcd}
1 \arrow{r} & \Sigma_{\underline\lambda} \arrow{r}\arrow{d} & W' \arrow{d}\arrow{r} & W \arrow{r}\arrow{d} & 1 \\
1 \arrow{r} & \Sigma_{\underline\lambda} \arrow{r} & \Sigma'_{\underline\lambda} \arrow{r} & \Sigma'_{\underline\lambda}/\Sigma_{\underline\lambda} \arrow{r} & 1
\end{tikzcd}$$
where an element $w'\in W'$ consists of a pair $(w,\xi)$ with $w\in W$ and $\xi\in \Sigma'_{\underline\lambda}$ such that $\rho_W(w)= \xi \mod \Sigma_{\underline\lambda}$. One can check that the homomorphism $\mathrm{p}_{\underline\lambda}: \mathbb{G}_m^r \to T$, and its dual $\rho|_{T'}: \check T \to (\mathbb{C}^\times)^r$, are $W'$-equivariant.

As in Section \ref{section hypergeometric sheaves}, the finite sequence of $\sigma$-positive weights $\underline \lambda\in \Lambda^r$ gives rise to a hypergeometric sheaf $\Psi_{\underline\lambda}$ on $T$, equipped with an action of $\Sigma_{\underline\lambda}$. This hypergeometric sheaf is well behaved under certain positivity condition that can be phrased in the present circumstance as follows.  
Let $\sigma:G\to \mathbb{G}_m$ be a character of $G$, we also denote $\sigma:\mathbb{C}^\times\to \check G$ the dual cocharacter. A representation $\rho:\check G\to \mathrm{GL}(V_\rho)$ is said to be $\sigma$-positive if for every weight $\lambda_i$ occurring $V_\rho$, $\lambda_i\circ \sigma:\mathbb{C}^\times\to \mathbb{C}^\times$ is of the form $t\mapsto t^n$ where $n$ is a positive integer.  We will assume that $\rho$ is $\sigma$-positive.  We will also assume that the homomorphism $\mathrm{p}_{\underline \lambda}:\mathbb{G}_m^r\to T$ is surjective. Under these assumptions, we know that $\Psi_{\underline\lambda}$ is a local system on $T$ with the degree shift $[\dim(T)]$. 

As the homomorphism $\mathrm{p}_{\underline\lambda}: \mathbb{G}_m^r \to T$ is $W'$-equivariant and the morphism $\mathrm{tr}:\mathbb{G}_m^r\to \mathbb{G}_a$ is invariant under the action of $W'$, we have an action of $W'$ on the hypergeometric sheaf $\Psi_{\underline\lambda}=\mathrm{p}_{\underline\lambda,!} \mathrm{tr}^* \mathcal{L}_\psi [r]$ compatible with the action of $W'$ on $T$ via $W'\to W$: for every $w'=(w,\xi)$ with $w\in W$ and $\xi\in \Sigma'_{\underline\lambda}$ having the same image in $\Sigma'_{\underline\lambda}/\Sigma_{\underline\lambda}$, by \cite[Proposition 6.2]{BK02} we have an isomorphism, $$
	\iota'_{w'}: w^* \Psi_{\underline\lambda} \to \Psi_{\underline\lambda}.
$$ We also know that the restriction of this action to $\Sigma_{\underline\lambda}$ is the sign character i.e. for $w'=(1,\xi)$ with $\xi\in \Sigma_{\underline\lambda}$, we have $\iota'_{w'}=\mathrm{sign}(\xi)$, $\xi$ being considered as a permutation of the finite set $A$. For every $w'=(w,\xi)$ we set  
\begin{equation}\label{action-of-W}
	\iota_{w'}=\mathrm{sign}_r(\xi)\mathrm{sign}_W(w) \iota'_{w'}: w^* \Psi_{\underline\lambda} \to \Psi_{\underline\lambda}
\end{equation}
where $\mathrm{sign}_r:\mathfrak{S}_r\to \{\pm 1\}$ and $\mathrm{sign}_W:W\to \{\pm 1\}$ are the sign characters of $\mathfrak{S}_r$ and $W$ respectively. We can then check that $\iota_{w'}$ depends only on $w$ so that we get an action of $W$ on $\Psi_{\underline\lambda}$.

Now we recall the Grothendieck--Springer simultaneous resolution of the fibers of the Steinberg morphism (see \cite[Section 6]{S65}) $c:G\to S=T/W$: $$
\begin{tikzcd}
\tilde G \arrow{r}{\tilde c} \arrow{d}[swap]{\tilde q}
& T \arrow{d}{q} \\
G \arrow{r}[swap]{c}
& S
\end{tikzcd}
$$ where $\tilde G$ is the variety of pairs $(g,h)\in G\times G/B$ such that $h^{-1}gh\in B$, the morphism $\tilde c$ given by $(g,h) \mapsto h^{-1}gh \mod U$ is smooth, and the morphism $\tilde q$ given by $(g,h)\mapsto g$ is proper and small in the sense of Goresky--MacPherson.  
If $T^\mathrm{rss}$ is the largest open subset of $T$ where $W$ acts freely and $S^\mathrm{rss}=T^\mathrm{rss}/W$, then the diagram is Cartesian over $S^\mathrm{rss}$. In particular $\tilde G^\mathrm{rss} \to G^\mathrm{rss}$ is a $W$-torsor, where $j^\mathrm{rss}:G^\mathrm{rss}\to G$ denotes the base change of the inclusion morphism $S^\mathrm{rss}\subset S$ to $G$.

Recall that the induction functor $\mathrm{Ind}_T^G: D^b_c(T) \to D^b_c(G)$ is defined by $$
	\mathrm{Ind}_T^G(\mathcal{F})=\tilde q_! \tilde c^* \mathcal{F}[d]
$$ where $d=\dim(G)-\dim(T)$. Because $q$ is a small map and $\Psi_{\underline\lambda}$ is a perverse local system on $T$,
$$	\mathrm{Ind}_T^G(\Psi_{\underline\lambda})=\tilde q_! \tilde c^* \Psi_{\underline\lambda} [d]$$
is a perverse sheaf isomorphic to the intermediate extension of its restriction to $G^\mathrm{rss}$:
$$	\mathrm{Ind}_T^G(\Psi_{\underline\lambda})=j^\mathrm{rss}_{!*} j^{\mathrm{rss},*}
	\mathrm{Ind}_T^G(\Psi_{\underline\lambda}).$$
For $\tilde G^\mathrm{rss} \to G^\mathrm{rss}$ is a $W$-torsor and $\Psi_{\underline\lambda}$ is $W$-equivariant, $W$ operates on $j^{\mathrm{rss},*}\mathrm{Ind}_T^G(\Psi_{\underline\lambda})$. By functoriality of the intermediate extension functor, this action of $W$ can be extended to the perverse sheaf $\mathrm{Ind}_T^G(\Psi_{\underline\lambda})$. 

\begin{definition}
	The $\gamma$-sheaf attached to $\rho$ is the $W$-invariant direct factor of the perverse sheaf $\mathrm{Ind}_T^G(\Psi_{\underline\lambda})$
$$		\Phi_\rho=\mathrm{Ind}_T^G(\Psi_{\underline\lambda})^W.$$
\end{definition}

As a direct factor of $\mathrm{Ind}_T^G(\Psi_{\underline\lambda})$, $\Phi_\rho$ is also isomorphic to the intermediate extension of its restriction to $G^\mathrm{rss}$. There is thus a slightly different way to construct it: we start by descending the restriction of  $\Psi_{\underline\lambda}$ to $T^\mathrm{rss}$, which is a $W$-equivariant perverse local system on $T^\mathrm{rss}$, to a perverse local system $\Phi_{\underline\lambda,S^\mathrm{rss}}$ on $S^\mathrm{rss}$. Then we have
$$	\Phi_\rho = j^\mathrm{rss}_{!*} c^{\mathrm{rss},*} \Phi_{\underline\lambda,S^\mathrm{rss}}[d].$$

The induction functor admits a left adjoint $\mathrm{Res}_T^G:D_c^b(G)\to D_c^b(T)$, the restriction functor, defined by
\begin{equation}\label{restriction definition}
\mathrm{Res}_T^G(\mathcal{F})=\pi_!i^*(\mathcal{F})
\end{equation}
where $\pi:B\to B/U=T$ and $i:B\to G$ denote the quotient and inclusion morphisms. More generally $\mathrm{Res}_M^G:D_c^b(G)\to D_c^b(M)$ could be defined if we replace $B$ by a standard parabolic subgroup $P$ of $G$ and $T$ by the Levi component $M$ of $P$ in \eqref{restriction definition}. The adjunction between restriction and induction and Frobenius reciprocity imply the following

\begin{proposition}\label{restriction lemma}
Let $\Phi_{\rho,M}$ denote the perverse sheaf $\Phi_{\rho'}$ on $M$ where $\rho'$ denotes the restriction of $\rho$ from $\check{G}$ to $\check{M}$, then
$$\Phi_{\rho,M}\simeq\mathrm{Res}_M^G(\Phi_\rho).$$
\end{proposition}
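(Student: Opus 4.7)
The plan is to reduce the claim to a Mackey-type decomposition of $\mathrm{Res}_M^G \circ \mathrm{Ind}_T^G$ and then extract $W$-invariants. First I would observe that, since $T$ is a maximal torus of both $G$ and $M$ and the restriction of $\rho$ to $\check T$ factors through $\check M$, the multiset of weights $\underline\lambda$ arising from $\rho$ agrees with that arising from $\rho' = \rho|_{\check M}$. Consequently the hypergeometric sheaf $\Psi_{\underline\lambda}$ on $T$ is the same for both constructions, and $\Phi_{\rho, M} = (\mathrm{Ind}_T^M \Psi_{\underline\lambda})^{W_M}$, where $W_M$ denotes the Weyl group of $M$.

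Next I would prove transitivity of induction, $\mathrm{Ind}_T^G \simeq \mathrm{Ind}_M^G \circ \mathrm{Ind}_T^M$, which by adjointness is equivalent to $\mathrm{Res}_T^M \circ \mathrm{Res}_M^G \simeq \mathrm{Res}_T^G$. Writing $P = M U_P$ for the standard parabolic with Levi $M$ and $B = B_M U_P$ for $B_M = B \cap M$, the Grothendieck--Springer variety $\tilde G = G \times^B T$ factors as $G \times^P (M \times^{B_M} T)$; comparing the $\tilde q_!$ and $\tilde c^*$ legs through this factorization yields the transitivity after accounting for the expected dimension shifts.

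The core step is then to compute $\mathrm{Res}_M^G \mathrm{Ind}_T^G \Psi_{\underline\lambda}$ via proper base change along $P \hookrightarrow G$. The Bruhat decomposition $G/B = \bigsqcup_{w \in W_M \backslash W} P w B / B$ stratifies the fiber product $P \times_G \tilde G$; combined with the $W$-equivariance of $\Psi_{\underline\lambda}$, this produces a Mackey-type isomorphism
$$\mathrm{Res}_M^G\, \mathrm{Ind}_T^G\, \Psi_{\underline\lambda} \;\simeq\; \bigoplus_{w \in W_M \backslash W} \mathrm{Ind}_T^M\bigl(w^* \Psi_{\underline\lambda}\bigr).$$
The $W$-action on $\mathrm{Ind}_T^G \Psi_{\underline\lambda}$ transports to an action on this direct sum that permutes the summands through the right action of $W$ on $W_M \backslash W$, while acting on each individual $w^*\Psi_{\underline\lambda} \simeq \Psi_{\underline\lambda}$ through the stabilizer $W_M$ of its coset, via the sign-normalized isomorphisms of \eqref{action-of-W}. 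Passing to $W$-invariants therefore singles out one summand per orbit and reduces to $W_M$-invariants on $\mathrm{Ind}_T^M \Psi_{\underline\lambda}$, which is by definition $\Phi_{\rho, M}$.

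The main obstacle I anticipate is upgrading the Bruhat stratification of $P \times_G \tilde G$ from a filtration to an honest direct-sum decomposition of perverse sheaves. Here the $\sigma$-positivity assumption on $\rho$ is decisive: it forces $\Psi_{\underline\lambda}$ to be a shifted perverse local system concentrated on a subtorus of $T$, so the successive extensions across Bruhat strata should split on the nose. A secondary subtlety is to verify that the $W$-action on $\Psi_{\underline\lambda}$ normalized by \eqref{action-of-W} restricts, on the summand indexed by the trivial coset, to the $W_M$-action used in the definition of $\Phi_{\rho, M}$; this amounts to the compatibility of sign characters and length functions under the inclusion $W_M \subset W$.
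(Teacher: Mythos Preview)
The paper does not supply its own argument here; it simply cites \cite[Theorem~6.6]{BK02}. Your proposal is therefore an independent attempt, and its overall architecture---reduce to a Mackey decomposition of $\mathrm{Res}_M^G\,\mathrm{Ind}_T^G\,\Psi_{\underline\lambda}$ indexed by $W_M\backslash W$, then extract $W$-invariants---is a natural and standard line of attack.

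That said, the two ``obstacles'' you flag at the end are genuine gaps, not routine verifications. The claim that $\sigma$-positivity, by making $\Psi_{\underline\lambda}$ a shifted local system, forces the Bruhat filtration to split is not a valid deduction: knowing the graded pieces are perverse tells you nothing about the extension classes between them. What one actually needs is an exactness property of $\mathrm{Res}_M^G$ on conjugation-equivariant perverse sheaves (a hyperbolic-localization/Braden-type statement), or alternatively a semisimplicity argument for the total complex; neither follows from the smoothness of $\Psi_{\underline\lambda}$ alone. You also do not track the cohomological shifts across the strata $PwB/B$, which have varying dimensions; a computation is required to see that every contribution lands in the same perverse degree.

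The second gap is more structural. The $W$-action on $\mathrm{Ind}_T^G\Psi_{\underline\lambda}$ is \emph{defined} by intermediate extension from the $W$-torsor $\tilde G^{\mathrm{rss}}\to G^{\mathrm{rss}}$; it is not given by any visible action on the Bruhat cells over $P$. Your assertion that, after applying $\mathrm{Res}_M^G$, this action permutes the summands via right translation on $W_M\backslash W$ is precisely the content of the geometric Mackey theorem and is where the real work lies---it cannot be assumed. A cleaner variant, closer in spirit to \cite{BK02}, is to compare both sides over the regular semisimple locus of $M$ (where everything is an explicit local system pulled back from $T^{\mathrm{rss}}$) and then argue that each side is the intermediate extension of its restriction there; this sidesteps the splitting issue but requires showing that $\mathrm{Res}_M^G\Phi_\rho$ is itself an IC sheaf.
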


\begin{proof}
This is the first statement of \cite[Theorem 6.6]{BK02}.
\end{proof}

In \cite[Conjecture 9.2]{BK00} Braverman and Kazhdan have conjectured the following vanishing property of $\Phi_\rho$.

\begin{conjecture}
	Let $\pi_U:G\to G/U$ denote the quotient map. For every $\sigma$-positive representation $\rho$ of $\check G$, $\pi_{U,!} \Phi_\rho$ is supported on the closed subset $T=B/U$ of $G/U$.
\end{conjecture}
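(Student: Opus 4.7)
The plan is to prove the conjecture for $G = \mathrm{GL}(n)$ by induction on $n$. The base case $n=1$ is trivial, since $U = \{1\}$ and $G/U = T$; the case $n=2$ is established by Braverman and Kazhdan in \cite{BK02}. For the inductive step, one reformulates the support condition as the vanishing of the stalk
\[(\pi_{U,!}\Phi_\rho)_{\bar x U} = H^*_c(\bar x U, \Phi_\rho|_{\bar x U})\]
for every geometric point $\bar x$ of $G$ whose class $\bar x U$ does not lie in $T = B/U$.

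The mirabolic subgroup $P_n \subset \mathrm{GL}(n)$ --- the stabilizer of a nonzero vector under the standard representation --- plays the central role. It fits into an exact sequence $1 \to V \to P_n \to \mathrm{GL}(n-1) \to 1$ with $V \cong \mathbb{G}_a^{n-1}$, and one stratifies $G/U$ by left $P_n$-orbits. The strata outside $T$ fibre over corresponding strata for $\mathrm{GL}(n-1)$ modulo its own upper-triangular unipotent, so by Proposition \ref{restriction lemma} --- which identifies $\mathrm{Res}_M^G \Phi_\rho$ with the $\gamma$-sheaf $\Phi_{\rho,M}$ for the Levi $M = \mathrm{GL}(n-1) \times \mathrm{GL}(1)$ of the maximal parabolic $P_n$ --- the inductive hypothesis handles the $\mathrm{GL}(n-1)$-direction.

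The remaining contribution is an integration along the unipotent radical $V$. By the construction $\Psi_{\underline\lambda} = \mathrm{p}_{\underline\lambda,!}\, \mathrm{tr}^* \mathcal{L}_\psi[r]$ of Section \ref{section hypergeometric sheaves}, the perverse sheaf $\Phi_\rho$ has a Fourier-theoretic pedigree: after induction along $\tilde q$ and descent through $c$, it is ultimately obtained by integrating an Artin--Schreier sheaf against a morphism built from the $\sigma$-positive cocharacters. On a stratum of $G/U$ lying off $T$, the $V$-integration reduces to computing $H^*_c(V, \mathcal{L}_\psi(\chi))$ for an additive character $\chi : V \to \mathbb{G}_a$ which, crucially, is nontrivial, whence the desired Artin--Schreier vanishing.

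The main obstacle is this last step: verifying that the character $\chi$ is genuinely nontrivial on every off-$T$ stratum, and that the sign-twisted $W$-action defined in \eqref{action-of-W} interacts compatibly with the analysis so that passage to the $W$-invariant direct factor $\mathrm{Ind}_T^G(\Psi_{\underline\lambda})^W = \Phi_\rho$ preserves the vanishing. This is where the $\sigma$-positivity of $\rho$ is used decisively, guaranteeing that no weight $\lambda_i$ degenerates and that the additive character picked up by pushforward along $V$ remains generic; and it is where the explicit realization of $W$ as $\mathfrak{S}_n$ for $G = \mathrm{GL}(n)$, together with the mirabolic decomposition, allows the argument to close inductively.
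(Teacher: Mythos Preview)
Your outline correctly identifies the inductive role of the mirabolic subgroup and the use of Proposition~\ref{restriction lemma}, but the core vanishing mechanism is misidentified. The restriction of $\Phi_\rho$ to a coset $U_Q x$ is \emph{not} an Artin--Schreier sheaf $\mathcal{L}_\psi(\chi)$, and the vanishing does not come from $H^*_c(V,\mathcal{L}_\psi(\chi))=0$ for a nontrivial additive character~$\chi$. On the open stratum where $e_1$ is a cyclic vector for $x$, the restriction of $\Phi_\rho$ is the pullback along the characteristic-polynomial map $c$ of a sheaf on $T/W$, and by Lemma~\ref{charpoly} the map $u\mapsto c(ux)$ identifies $U_Q x$ with a fibre of the determinant map $T/W\to\mathbb{G}_m$. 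The decisive observation is then that $W=\mathfrak{S}_n$ acts on $\sigma_!\Psi_{\underline\lambda}$ through the \emph{sign} character --- this is precisely what the twist in~\eqref{action-of-W} arranges --- so the $W$-invariant part vanishes for $n\ge 2$. Taking $W$-invariants is not a compatibility condition that ``preserves'' a prior vanishing; it is the \emph{source} of the vanishing. Your proposal has this backwards.

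The relevant stratification is also not by left $P_n$-orbits on $G/U$. One stratifies $G$ by the \emph{conjugation} action of the mirabolic, according to the cyclic depth $m=\dim\langle e_1,xe_1,x^2e_1,\dots\rangle$ (Theorem~\ref{Bernstein-stratification}). Only on the open stratum $X_n$ does the sign-character argument above apply directly. For the intermediate strata $X_m$ with $2\le m<n$ one needs Lemma~\ref{unipotent-conjugation}: a coordinate change showing that conjugation by a suitable subgroup $U_{m-1}\subset U_P$ carries the coset $U_Q x$ isomorphically onto a coset of the form $U_P U_L x$, which via Proposition~\ref{restriction lemma} reduces to the already-established generic case for the Levi factor $\mathrm{GL}(m)\times\mathrm{GL}(n-m)$. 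Your proposal contains no analogue of this conjugation trick, and without it the argument does not close on the non-generic strata.
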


The conjecture can be reformulated as follows. For every geometric point $g\in G-B$, we conjecture that 
$$\mathrm{H}_{c}^*(gU,i^*\Phi_\rho)=0$$
where $i:gU\to G$ denotes the inclusion map.

Braverman and Kazhdan have verified their conjecture for groups of semisimple rank one, and for $G=\mathrm{GL}(n)$, $\sigma=\det$ and $\rho$ the standard representation of $\mathrm{GL}_n(\mathbb{C})$.

\begin{theorem} \label{BKGL}
	The above conjecture holds for $G=\mathrm{GL}(n)$, $\sigma=\det$ and an arbitrary  $\sigma$-positive  representation of $\rho$ of $\mathrm{GL}_n(\mathbb{C})$.
\end{theorem}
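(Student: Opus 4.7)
The plan is to proceed by induction on $n$, taking as base case $n=2$, which is established in \cite{BK02}. We fix a geometric point $g \in G \setminus B$ and must show $\mathrm{H}_c^*(gU, \Phi_\rho|_{gU}) = 0$.

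The first step is parabolic reduction. For any proper standard parabolic $P = MU_P$ of $G$, one has $U = U_M U_P$ with $U_M = U \cap M$, and $\pi_U$ restricted to $P$ factors as $P \to M \to M/U_M$. By proper base change and Proposition \ref{restriction lemma},
\[
  \pi_{U,!}(\Phi_\rho|_P) \;\cong\; \pi_{U_M,!}(\mathrm{Res}_M^G \Phi_\rho) \;=\; \pi_{U_M,!}(\Phi_{\rho|_{\check M}}).
\]
For $G = \mathrm{GL}(n)$ every proper Levi is a product $\mathrm{GL}(n_1) \times \cdots \times \mathrm{GL}(n_r)$ with $n_i < n$, so the induction hypothesis applied to each factor shows that this sheaf is supported on $T = B_M/U_M \hookrightarrow M/U_M = P/U$. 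In particular, the stalk at $gU$ vanishes for every $g \in P \setminus B$. Letting $P$ range over all proper standard parabolics disposes of every $g \in BwB$ with $w$ contained in a proper Young subgroup of $W = \mathfrak{S}_n$.

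What remains is the case $g \in BwB$ for $w$ not contained in any proper Young subgroup, most notably the long element $w_0$ and the $n$-cycles. Here the mirabolic subgroup $P_0 \subset G$, consisting of matrices with last row $(0,\ldots,0,1)$, is the essential tool. Its structure as a semidirect product $P_0 = \mathrm{GL}(n-1) \ltimes \mathbb{G}_a^{n-1}$ provides an extra $\mathbb{A}^{n-1}$-fibration, even though $P_0$ is not itself a parabolic subgroup. I would fiber $gU$ along the $\mathbb{G}_a^{n-1}$-direction of $P_0$, apply proper base change to the hypergeometric presentation $\Phi_\rho = \mathrm{Ind}_T^G(\Psi_{\underline\lambda})^W$, and invoke the induction hypothesis for $\mathrm{GL}(n-1)$ on the transverse factor.

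The main technical obstacle is identifying, after integration along $\mathbb{G}_a^{n-1}$, the resulting sheaf on the quotient of $gU$ as a hypergeometric convolution $\Psi_{\underline{\lambda}'}\star \mathcal L$ for a suitable sub-multiset $\underline{\lambda}'$ of the weights and a Kummer local system $\mathcal L$ dictated by the Bruhat data of $w$. Once this matching is in place, Proposition 1.4 concentrates all cohomology in degree zero and canonically identifies it with $\mathcal L$, and the residual $\mathrm{H}_c^*$ vanishes because the leftover direction carries a nontrivial Kummer or Artin--Schreier character coming from the nontriviality of $w$. Tracking the combinatorics of the weights, preserving $\sigma$-positivity, and honoring the $W'$-equivariance in \eqref{action-of-W} throughout this reduction is expected to be the subtlest aspect of the argument.
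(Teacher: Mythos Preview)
Your parabolic reduction step is correct and close to what the paper does: when $g$ lies in a proper standard parabolic $P$, Proposition~\ref{restriction lemma} together with the factorization $U = U_M U_P$ reduces the vanishing over $gU$ to the analogous statement for the Levi, which falls to induction. The paper organizes the split slightly differently---it asks whether $g$ lies in the mirabolic $Q$ rather than whether $w$ lies in a proper Young subgroup---but the content is equivalent.

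The gap is in the remaining case. You correctly name the mirabolic as the essential tool, but what follows is a plan rather than a proof: you ``would fiber $gU$ along the $\mathbb{G}_a^{n-1}$-direction,'' hope to match the result with some $\Psi_{\underline\lambda'} \star \mathcal{L}$, and then appeal to the Mellin-type proposition of Section~\ref{section hypergeometric sheaves} together with an unspecified Kummer or Artin--Schreier character attached to $w$. None of these identifications is carried out, and this is not the mechanism that actually works. The paper first reduces, via a Leray argument, from vanishing over the full Borel coset $gU$ to vanishing over the smaller mirabolic coset $U_Q g$ (Proposition~\ref{vanishing-mirabolic}). That coset is then analyzed through Bernstein's stratification $G=\bigsqcup_m X_m$ for the $Q$-conjugation action (Theorem~\ref{Bernstein-stratification}). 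On the open stratum, an explicit companion-matrix calculation (Lemma~\ref{charpoly}) shows that the characteristic polynomial map sends $U_Q x$ isomorphically onto a fiber of $\sigma_W:T/W\to\mathbb{G}_m$, so the cohomology in question is a stalk of $(\sigma_! \Psi_{\underline\lambda})^W$. The vanishing then comes from a sign-character trick: with the normalization \eqref{action-of-W}, $W=\mathfrak{S}_n$ acts on $\sigma_! \Psi_{\underline\lambda}$ through $\mathrm{sign}_n$, so for $n\geq 2$ the $W$-invariants are zero. Lower strata $X_m$ are reduced to the open stratum of a smaller $\mathrm{GL}$ via Lemma~\ref{unipotent-conjugation}. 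No convolution with a Kummer local system, and no Bruhat combinatorics of the individual $w$, enters the argument.
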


Our argument applies in the case when $\mathrm{p}_{\underline{\lambda}}:\mathbb{G}_m^r\to T$ is surjective, however for $\sigma$-positive $\rho$ the only other possibility is when $\rho$ factors through $\mathrm{det}:\mathrm{GL}_n(\mathbb{C})\to\mathbb{C}^\times$, which implies that $\Phi_\rho$ is supported on the center of $\mathrm{GL}(n)$, so the theorem holds trivially in this case as well.

\section{Conjugation action of the mirabolic group}
\label{mirabolic}

Our proof of the Braverman--Kazhdan conjecture for $G=\mathrm{GL}(n)$ is based on the geometry of the conjugation action of the mirabolic. This geometry has been described by Bernstein in \cite[4.1-4.2]{B84}. What we aim for here is to put Bernstein's description into a form suitable for our purpose. For consistency of notations with \cite{B84} we will follow Bernstein and consider left group actions and left quotients for the rest of this paper.  In particular our version of Theorem \ref{BKGL} applies to $U\backslash G$ instead of $G/U$.

Let $V$ be the standard $n$-dimension $k$-vector space with the standard basis $e_1,\ldots,e_n$. We consider the filtration $0\subset F_1 \subset F_2 \subset \cdots \subset F_n=V$ where $F_i$ is the subvector space generated by $e_1,\ldots,e_i$. We will denote $E_i=V/F_i$. 

We denote by $Q$ the mirabolic subgroup of $G=\mathrm{GL}(V)$ consisting of elements $g\in G$ fixing the line generated by $e_1$, and $Q_1$ the subgroup consisting of elements $g\in G$ fixing the vector $e_1$.
We consider the $Q$-equivariant stratification of $G$
\begin{equation} \label{stratification}
	G=\bigsqcup_{m=1}^n X_m
\end{equation} 
where $X_m$ is the locally closed subset of $V$ consisting of $g\in G$ such that the subspace $F_x$ of $V$ generated by the vectors $v,xv,x^2v,\ldots$ is of dimension $m$. We will prove that $[Q_1\backslash X_m]$ ``looks like'' a similar quotient $[Q_1\backslash G]$ in lower rank. This statement can be made precise as follows.   

\begin{theorem} \label{Bernstein-stratification}
	There exists a smooth surjective morphism 
$$\phi_m:[\mathrm{GL}(n-m)\backslash(\mathbb{A}^m \times \mathrm{GL}(n-m) \times \mathbb{A}^{n-m})] \to [Q_1\backslash X_m]$$
where $g\in \mathrm{GL}_{n-m}$ acts on $(a,x',v)\in \mathbb{A}^m \times \mathrm{GL}(n-m) \times \mathbb{A}^{n-m}$ by the formula 
$$g(a,x',v) =(a,gx'g^{-1},vg^{-1}).$$ 
Moreover, if $\phi_m(a,x',v)=x$ then 
$$c(x)=a_tc(x')$$
where $c(x)$ and $c(x')$ are the characteristic polynomials of $x$ and $x'$ respectively, written with the formal variable $t$, and where $a_t$ is the polynomial $$a_t=t^m+a_1 t^{m-1} +\cdots + a_m$$ for every $a=(a_1,\ldots,a_m) \in \mathbb{A}^m$.\end{theorem}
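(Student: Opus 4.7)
The plan is to define $\phi_m$ on atlases by an explicit block matrix formula, verify the equivariance needed to descend to the quotient stacks, and then establish surjectivity and smoothness by normalization and tangent-space arguments.

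On atlases, I would set
\[
\tilde\phi_m(a, x', v) = \begin{pmatrix} A(a) & e_1 v \\ 0 & x' \end{pmatrix}
\]
in block form with respect to $V = F_m \oplus \langle e_{m+1}, \ldots, e_n\rangle$, where $A(a) \in \mathrm{GL}(F_m)$ is the companion matrix of $a_t = t^m + a_1 t^{m-1} + \cdots + a_m$ and $v$ is viewed as a row functional on $\langle e_{m+1}, \ldots, e_n\rangle$. Then $F_m$ is $x$-invariant and $A(a)$ has $e_1$ as a cyclic vector, so $F_x = F_m$ and the image lies in $X_m$; the factorization $c(x) = a_t c(x')$ is immediate from block triangularity. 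A direct computation with $q = \mathrm{diag}(I_m, g) \in Q_1$ shows $\tilde\phi_m(a, gx'g^{-1}, vg^{-1}) = q \tilde\phi_m(a, x', v) q^{-1}$, so $\tilde\phi_m$ is $\mathrm{GL}(n-m)$-equivariant (with $g$ acting on the target through the embedding $g \mapsto \mathrm{diag}(I_m, g) \hookrightarrow Q_1$) and descends to the morphism $\phi_m$.

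Surjectivity is established by three successive normalizations under $Q_1$-conjugation. First, $Q_1$ acts transitively on the $m$-dimensional subspaces of $V$ containing $e_1$ (via its Levi quotient $\mathrm{GL}(V/\langle e_1\rangle)$ acting on the relevant Grassmannian), so one may conjugate $x$ to achieve $F_x = F_m$; then $x = \begin{pmatrix} A & C \\ 0 & B \end{pmatrix}$ with $A$ cyclic and $e_1$ cyclic vector. Second, since the stabilizer in $\mathrm{Stab}_{\mathrm{GL}(F_m)}(e_1)$ of any cyclic matrix is trivial, a unique element there conjugates $A$ into companion form $A(a)$. Third, using the unipotent subgroup $\mathrm{Hom}(\langle e_{m+1}, \ldots, e_n\rangle, F_m) \subset Q_1$, conjugation modifies $C$ by the Sylvester-type operator $C \mapsto C + q_{12} B - A q_{12}$; solving this row by row shows $C$ can always be brought to the rank-one form $e_1 v$, explicitly with $v = c^{(1)} + c^{(2)} B + \cdots + c^{(m)} B^{m-1}$, where $c^{(i)}$ are the rows of $C$.

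Smoothness follows from analyzing the combined map $\Phi : Q_1 \times \mathbb{A}^m \times \mathrm{GL}(n-m) \times \mathbb{A}^{n-m} \to X_m$ given by $\Phi(q, a, x', v) = q \tilde\phi_m(a, x', v) q^{-1}$, which is surjective by the preceding step. The differential of $\Phi$ at any point splits into the adjoint action $\xi \mapsto [\xi, x_0]$ for $\xi \in \mathrm{Lie}(Q_1)$ plus the differential of $\tilde\phi_m$, and a tangent-space check shows these together span $T_{x_0} X_m$. The dimensions balance: the source has dimension $n^2 + (n-m)^2$ while $\dim X_m = n^2 - n + m$, giving relative dimension $(n-m)(n-m+1)$, which corresponds after descent to $\phi_m$ having relative dimension $n - m$. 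The main technical obstacle is step three of surjectivity: the Sylvester operator $q_{12} \mapsto A q_{12} - q_{12} B$ is generally neither injective nor surjective, and the resulting $v$ is determined only modulo the image of right-multiplication by $a_t(B)$. This ambiguity, reflecting the interplay between the cyclic $k[t]$-module $(F_m, A) \cong k[t]/(a_t)$ and $(V/F_m, B)$, is responsible for the relative dimension $n - m$ of $\phi_m$ and makes the tangent-space verification the most delicate part of the argument.
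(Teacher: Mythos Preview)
Your construction is correct and produces the same morphism as the paper's (up to the harmless reparametrization $v\leftrightarrow v\,x'^{-1}$), but the route to smoothness is genuinely different. In your third normalization you conjugate by all of $U_P=\mathrm{Hom}(E,F)$ and correctly observe that the Sylvester operator $q\mapsto Aq-qB$ is neither injective nor surjective; your row-by-row recursion (with the free choice of $q^{(m)}$) does solve it, and the formula $v=c^{(1)}+c^{(2)}B+\cdots+c^{(m)}B^{m-1}$ is right. You then defer smoothness to a tangent-space computation. The paper avoids this by replacing conjugation by $U_P$ with a \emph{mixed} action of $U_1\times U_{m-1}$, where $U_1=\mathrm{Hom}(E,F_1)$ acts by left translation and $U_{m-1}=\mathrm{Hom}(E,F_{m-1})$ acts by conjugation; the point of Lemma~3.2 is that this hybrid action is \emph{simply transitive} on the off-diagonal block, yielding an explicit product decomposition $Y_m\cong\mathbb{A}^m\times\mathrm{GL}(E)\times U_1\times U_{m-1}$. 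Smoothness then falls out structurally: $\phi_m$ is the passage from the quotient by the subgroup $U_{m-1}\rtimes\mathrm{GL}(E)$ to the quotient by $H_m=U_P\rtimes\mathrm{GL}(E)$, automatically smooth of relative dimension $\dim U_P-\dim U_{m-1}=n-m$. What the paper's trick buys is that no infinitesimal computation is needed; what your approach buys is that it stays entirely within the conjugation picture. Note, incidentally, that your own fiber analysis already contains a shorter endgame than the tangent-space check you propose: your recursion shows that $U_P\times S\to Y_m$ is a surjection of smooth varieties with every fiber isomorphic to $\mathbb{A}^{n-m}$ (parametrized by $q^{(m)}$), hence smooth by miracle flatness, and this propagates to $\Phi$ and then to $\phi_m$.
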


Theorem \ref{Bernstein-stratification} implies, through an induction on the rank $n$, that the mirabolic group acts on the space of matrices $x\in G$ of a given characteristic polynomial with finitely many orbits. 

\begin{proof}
For each $m$, we have a fibration
$$\pi_m: X_m \to \mathrm{Gr}_m$$
where $\mathrm{Gr}_m$ is the Grassmannian of $m$-dimensional subspaces $F_x$ of $V$ containing $v$. It maps $x\in X_m$ to the subspace $F_x$ of $V$ generated by the vectors $v,xv,x^2v,\ldots$ which is of dimension $m$. 

For each $m$ we also have the subspace $F_m$ and the quotient space $E_m$ of $V$.
When there is no possible confusion on $m$, we will drop the index $m$ and write simply $F$ for $F_m$, $E$ for $E_m$. 
We consider the parabolic subgroup $P$ of $G$ consisting of elements $g\in G$ such that $gF_m=F_m$. An element $g\in P$ can be described as a block matrix
$$g=\left[	\begin{array}{cc}
		g_F & v \\
		0 & g_E
	\end{array} \right]$$
where $g_F\in \mathrm{GL}(F)$ and $g_E\in \mathrm{GL}(E)$. The group $Q_1$ acts transitively on $\mathrm{Gr}_m$ and its stabilizer at a point $F\in \mathrm{Gr}_m$ is $P\cap Q_1$. The intersection $P\cap Q_1$ can be described by the condition $g_F\in Q_{F,1}$ where $Q_{F,1}$ is the subgroup of $\mathrm{GL}(F)$ defined by $g_F e_1=e_1$. 

The fiber $\pi_m^{-1}(F)$ is the open subset of $P$ consisting of block matrices 
$$x=\left[	\begin{array}{cc}
		x_F & y \\
		0 & x_E
	\end{array} \right]$$
such that $v$ is a cyclic vector of $F$ with respect to the action of $x_F$.
The group $P\cap Q_1$ acts on $\pi_m^{-1}(F)$ by conjugation and we have
$$X_m= \pi_m^{-1}(F) \times^{P \cap Q_1} Q_1.$$

For $x_F$ as above, $v,x_Fv,\ldots,x^{m-1}_Fv$ form a basis of $F$. It follows that  there exists a unique $g_F\in Q_{F,1}$ such that $g_F^{-1}x_F g_F$ has the form of a companion matrix
\begin{equation} \label{companion}
	\left[
	\begin{array}{ccc|c}
		0 & \cdots & 0 & -a_m \\ \hline
		  &  &         & -a_{m-1} \\
		  & \mathrm{I}_{m-1} & & \vdots \\
		  & & & -a_1
	\end{array}
	\right]
\end{equation}
where $\mathrm{I}_{m-1}$ is the the identity matrix of size $m-1$, and $a_1,\ldots,a_m$ are the coefficients of the characteristic polynomial of $x_F$. 
It follows that
$$X_m=Y_m \times^{H_m} Q_1$$
where $Y_m$ is the space of matrices of the form
\begin{equation} \label{x}
x=\left[	\begin{array}{cc}
		x_F & y \\
		0 & x_E
	\end{array} \right]
\end{equation}
where $x_F$ is a companion matrix as in \eqref{companion}, $H_m$ is the subgroup of $P$ of matrices of the form
$$h=\left[	\begin{array}{cc}
		\mathrm{I}_F & v \\
		0 & g_E
	\end{array} \right]$$
acting on $Y_m$ by conjugation. The group $H_m$ has the structure of a semidirect product
$$H_m=U_P \rtimes \mathrm{GL}(E)$$
where $U_P$, the unipotent radical of $P$, consists of matrices of the form
$$u=\left[	\begin{array}{cc}
		\mathrm{I}_F & v \\
		0 & \mathrm{I}_E
	\end{array} \right].$$
It will be convenient to regard $u-\mathrm{I}_V$ as a linear application $v\in \mathrm{Hom}(E,F)$.

The action of $U_P$ on $X_F$ can be written down as follows
$$\left[	\begin{array}{cc}
		\mathrm{I}_F & -v \\
		0 & \mathrm{I}_E
	\end{array} \right]
	\left[	\begin{array}{cc}
		x_F & y \\
		0 & x_E
	\end{array} \right]
	\left[	\begin{array}{cc}
		\mathrm{I}_F & v \\
		0 & \mathrm{I}_E
	\end{array} \right]=
	\left[	\begin{array}{cc}
		x_F & y +x_Fv -vx_E  \\
		0 & x_E
	\end{array} \right].$$
In other words, the action of $v\in \mathrm{Hom}(E,F)$ on the variable $y$ consists in a translation by $x_Fv -vx_E$. 

Now, the rank of the linear transformation of $\mathrm{Hom}(E,F)$ given by 
$$v\mapsto x_Fv -vx_E$$ 
depends on the number of common eigenvalues of $x_F$ and $x_E$, in particular it is an isomorphism if $x_F$ and $x_E$ have no common eigenvalues. Thus if $x_F$ and $x_E$ have no common eigenvalues, $U_P$ acts simply transitively on the fiber of $Y_m$ over $(x_F,x_E)$ by conjugation. However, for our purpose, we will need a statement uniform with respect to $(x_F,x_E)$, no matter whether they have common eigenvalues or not.

\begin{lemma} \label{unipotent-conjugation}
Let $0=F_0\subset F_1\subset F_2 \subset \cdots \subset F_{m-1} \subset F$ be a filtration of $F$ with $\dim(F_j)=j$. Let $x_F\in\mathrm{GL}(F)$ be a linear transformation such that $x_F(F_j) \subset F_{j+1}$ and the induced application $F_j / F_{j-1} \to F_{j+1}/F_j$ is an isomorphism for every $j$ in the range $1\leq j \leq m-1$. Let $x_E\in GL(E)$ be an arbitrary linear transformation.

	We consider two subgroups of $U_P=\mathrm{Hom}(E,F)$:
\begin{itemize}
	\item $U_1=\mathrm{Hom}(E,F_1)$. 
	\item $U_{m-1}=\mathrm{Hom}(E,F_{m-1})$.
\end{itemize}
Then the action of $U_1\times U_{m-1}$ on the space of matrices of the form 
$$x=\left[	\begin{array}{cc}
		x_F & y \\
		0 & x_E
	\end{array} \right]$$
given by
$$(u_1,u_{m-1}) x=u_1 u_{m-1} x u_{m-1}^{-1}$$
is simply transitive. 
\end{lemma}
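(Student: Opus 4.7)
The plan is to unwind the $(U_1 \times U_{m-1})$-action and reduce the claim to the bijectivity of an explicit linear map. Since $U_P = \mathrm{Hom}(E, F)$ is abelian, a direct block-matrix computation gives
\[
u_1\, u_{m-1}\, x\, u_{m-1}^{-1} \;=\; \begin{pmatrix} x_F & y + \beta(u_1, u_{m-1}) \\ 0 & x_E \end{pmatrix},
\]
where $\beta \colon U_1 \oplus U_{m-1} \to \mathrm{Hom}(E, F)$ is the $k$-linear map
\[
\beta(u_1, u_{m-1}) \;=\; (u_1 + u_{m-1})\, x_E \;-\; x_F\, u_{m-1}.
\]
Thus $x_F$ and $x_E$ are preserved and the asserted simple transitivity is equivalent to $\beta$ being a bijection. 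A direct dimension count gives $\dim U_1 + \dim U_{m-1} = (n-m) + (n-m)(m-1) = (n-m)\,m = \dim \mathrm{Hom}(E, F)$, so it suffices to show that $\beta$ is injective.

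For injectivity, suppose $(u_1 + u_{m-1})\, x_E = x_F\, u_{m-1}$. Starting from the tautology $u_{m-1} \in \mathrm{Hom}(E, F_{m-1})$, I would prove by downward induction, for $j$ running from $m-1$ down to $1$, that $u_{m-1}$ factors through $F_{j-1}$. Granted that $u_{m-1}$ takes values in $F_j$ for some $1 \le j \le m-1$, the element $u_1 + u_{m-1}$ also takes values in $F_j$ (using $F_1 \subset F_j$), so the left-hand side of the equation lies in $\mathrm{Hom}(E, F_j)$. Reducing both sides modulo $F_j$ in $\mathrm{Hom}(E, F_{j+1}/F_j)$ then forces $0 = \bar{x}_F \circ \bar{u}_{m-1}$, where $\bar{u}_{m-1} \in \mathrm{Hom}(E, F_j/F_{j-1})$ is the graded piece of $u_{m-1}$ and $\bar{x}_F \colon F_j/F_{j-1} \to F_{j+1}/F_j$ is the isomorphism supplied by the hypothesis on $x_F$. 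Hence $\bar{u}_{m-1} = 0$, so $u_{m-1}$ factors through $F_{j-1}$. Iterating down to $j = 1$ yields $u_{m-1} = 0$; the equation then collapses to $u_1\, x_E = 0$, and invertibility of $x_E$ forces $u_1 = 0$.

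The conceptual content is that conjugation by $U_P$ alone fails to be simply transitive precisely when $x_F$ and $x_E$ share eigenvalues, the Sylvester-type operator $v \mapsto x_F v - v x_E$ then having a nontrivial kernel. The combination of conjugation by $U_{m-1}$ together with left-multiplication by $U_1$, coupled with the flag hypothesis on $x_F$, provides a uniform substitute that succeeds for arbitrary $x_E$. The only real obstacle is bookkeeping in the block-matrix calculation and keeping track of which $F_j$ each object lands in during the downward induction; once $\beta$ is written down the rest is essentially forced by the filtration hypothesis.
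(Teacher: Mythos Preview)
Your proof is correct and follows essentially the same strategy as the paper: compute the action as a linear translation in the $y$-slot, reduce simple transitivity to bijectivity of the resulting linear map, and then exploit the filtration hypothesis on $x_F$ to conclude. The only cosmetic difference is that the paper quotients out $\mathrm{Hom}(E,F_1)$ and argues via compatible filtrations on source and target with isomorphic graded pieces, whereas you pair a dimension count with a downward induction for injectivity; incidentally, your formula $\beta(u_1,u_{m-1})=(u_1+u_{m-1})x_E - x_F u_{m-1}$ is the accurate one, the paper's $v_1$ in place of $v_1 x_E$ being a harmless slip since $x_E$ is invertible.
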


\begin{proof}
	We write 
	\begin{equation*}
		u_1=\left[	\begin{array}{cc}
		\mathrm{I}_F & v_1 \\
		0 & \mathrm{I}_E
	\end{array} \right] \mbox{ and }
	u_{m-1}=\left[	\begin{array}{cc}
		\mathrm{I}_F & v_{m-1} \\
		0 & \mathrm{I}_E
	\end{array} \right]
	\end{equation*}
with $u_1\in \mathrm{Hom}(E,F_1)$ and $u_{m-1}\in \mathrm{Hom}(E,F_{m-1})$. Then we have
\begin{equation*}
	u_1 u_{m-1} x u_{m-1}^{-1} = 
	\left[ \begin{array}{cc}
		x_F & y + v_1 + v_{m-1}x_E - x_F v_{m-1}\\
		0 & x_E
	\end{array} \right]
\end{equation*}
The lemma is now equivalent to saying that the linear application
$$\mathrm{Hom}(E,F_1) \times \mathrm{Hom}(E,F_{m-1}) \to \mathrm{Hom}(E,F)$$
given by $(v_1,v_{m-1}) \mapsto v_1 + v_{m-1}x_E - x_F v_{m-1}$
is an isomorphism. This is equivalent to proving that the map
$$A=\mathrm{Hom}(E,F_{m-1}) \to \mathrm{Hom}(E,F/E_1)=B$$
given by $$\phi(v_{m-1})=v_{m-1}x_E - x_F v_{m-1} \mod \mathrm{Hom}(E,F_1)$$ is an isomorphism. 

We consider the filtration $0\subset A_1 \subset \cdots \subset A_{m-1}=A$ with $A_j=\mathrm{Hom}(E,F_j)$ and the filtration $0\subset B_1 \subset \cdots \subset B_{m-1}=B$ with $B_j=\mathrm{Hom}(E,F_{j+1}/F_1)$. 
We observe that $\phi(A_j)\subset B_j$ for all $j$ and the induced map on the associated graded $A_{j}/A_{j-1} \to B_j/B_{j-1}$ is an isomorphism. 
Indeed the linear application $\phi_E:v_{m-1}\mapsto v_{m-1}x_E$ satisfies $\phi_E(A_j)\subset B_{j-1}$ and hence induces the zero map on the associated graded $A_{j}/A_{j-1} \to B_j/B_{j-1}$. On the other hand, $\phi_F:v_{m-1}\mapsto x_Fv_{m-1}$ satisfies $\phi_F(A_j)\subset B_j$ and induces an isomorphism on the associated graded $A_{j}/A_{j-1} \to B_j/B_{j-1}$ by assumption on $x_F$. It follows that $\phi$ is an isomorphism.
\end{proof}

We infer from the lemma the existence of a canonical isomorphism 
	$$Y_m=\mathbb{A}^m \times \mathrm{GL}(E) \times \mathrm{Hom}(E,F_1) \times \mathrm{Hom}(E,F_{m-1})$$
mapping $x \mapsto (x_F,x_E,v_1,v_{m-1})$ with $x_F$ a companion matrix as in \eqref{companion}, $x_E\in \mathrm{GL}(E)$, $v_1\in \mathrm{Hom}(E,F_1)$ and $v_{m-1} \in \mathrm{Hom}(E,F_{m-1})$ such that 
$$x=	
	\left[	\begin{array}{cc}
		\mathrm{I}_F & v_1 \\
		0 & \mathrm{I}_E
	\end{array} \right] 
	\left[	\begin{array}{cc}
		\mathrm{I}_F & v_{m-1} \\
		0 & \mathrm{I}_E
	\end{array} \right] 
	\left[	\begin{array}{cc}
		x_F & 0 \\
		0 & x_E
	\end{array} \right] \left[	\begin{array}{cc}
		\mathrm{I}_F & -v_{m-1} \\
		0 & \mathrm{I}_E
	\end{array} \right].$$
In these new coordinates, the action of the subgroup $U_{m-1}\rtimes \mathrm{GL}(E)$ of 
$H_m=U_P \rtimes \mathrm{GL}(E)$ can be described as follows: the action $(v'_{m-1},g_E) \in \mathrm{Hom}(E,F_{m-1})\rtimes \mathrm{GL}(E)$ on $x=(x_F,x_E,v_1,v_{m-1})$ is given by:
$$(v'_{m-1},g_E) x=(x_F,g_E x_E g_E^{-1}, v_1 g_E^{-1}, v_{m-1}+v'_{m-1}).$$
One should note that $U_{m-1}\rtimes \mathrm{GL}(E)$ is not a normal subgroup of $U_P\rtimes \mathrm{GL}(E)$ and the action of the full $U_P$ is unfortunately very complicated in these coordinates. 
Nevertheless, we have a smooth surjective morphism
$$[(U_{m-1}\rtimes \mathrm{GL}(E)\backslash Y_m] \to [H_m\backslash Y_m].$$
This completes the proof of Theorem \ref{Bernstein-stratification}.
\end{proof}

\section{Action of $U_Q$ by left translation}

The unipotent radical $U_Q$ of the mirabolic group $Q$ consists of matrices of the form
$$u=
	\left[
	\begin{array}{c|c}
	1 & v \\ \hline
	0 & \mathrm{I}_{n-1}
	\end{array}
	\right]$$
where $v\in\mathrm{Hom}(E_1,F_1)$. The action of $U_Q$ on $G$ by left translation $g\mapsto ug$ respects the stratification $G=\bigsqcup_m X_m$. In this section, we will pay particular attention to the evaluation of the characteristic polynomial on left cosets of $U_Q$ i.e. the function $u\mapsto c(ux)$ for $x$ in each stratum $X_m$.

The characteristic polynomial $c(x)$ of $x\in G$, with formal variable $t$, is of the form $c(ux)=t^n+a_1 t^{n-1} +\cdots+a_{n}$ where $a_1,\ldots,a_n$ are $G$-invariant functions of $x$. It can be regarded as a morphism $c:G\to \mathbb{A}^n$ with $c(ux)=(a_1,\ldots,a_n)$.  

\begin{proposition}
For every $x\in G$, the morphism $l_x: U_Q=\mathrm{Hom}(E_1,F_1) \to \mathbb{A}^n$ given by $l_x(u)=c(ux)-c(x)$ is linear. If $x\in X_m$, the linear application $l_x$ is of rank $m-1$.	
\end{proposition}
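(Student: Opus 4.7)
The plan is to reduce the computation to a single application of the matrix determinant lemma. Writing
$u = I_n + e_1 \tilde v^T$ with $\tilde v = (0, v_2, \ldots, v_n)^T$ (so that $u - I_n$ has $v$ as its first row and zeros elsewhere), we get $ux = x + e_1 (\tilde v^T x)$, a rank-one perturbation of $x$. Applying the rank-one formula $\det(A + ab^T) = \det(A) + b^T \operatorname{adj}(A) a$ with $A = tI - x$, $a = -e_1$, $b^T = \tilde v^T x$ yields
\[
c(ux) \;=\; c(x) \;-\; \tilde v^T \, x \, \operatorname{adj}(tI - x) \, e_1.
\]
Setting $q(t) := x\,\operatorname{adj}(tI - x)\,e_1 \in V \otimes k[t]$, we obtain $l_x(u) = -\tilde v^T q(t)$, which is manifestly linear in $\tilde v$ and hence in $v$. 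This proves the linearity assertion.

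For the rank, expand $q(t) = \sum_{k=0}^{n-1} q_k t^k$ and form the $n \times n$ matrix $M$ with columns $q_0, \ldots, q_{n-1}$. Under the coefficient identification $\mathbb{A}^n \cong k[t]_{<n}$, the map $l_x$ becomes $\tilde v \mapsto -\tilde v^T M$; restricting to $\tilde v$ with vanishing first coordinate amounts to deleting the first row of $M$. The rank of the resulting $(n-1) \times n$ matrix equals the dimension of the projection into $V/F_1$ of the column span $W := \operatorname{span}(q_0, \ldots, q_{n-1}) \subseteq V$ of $M$. Hence
\[
\operatorname{rank}(l_x) \;=\; \dim W \;-\; \dim (W \cap F_1).
\]

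The remaining point is to identify $W = F_x$. From the adjugate identity $(tI - x)\operatorname{adj}(tI - x) = c(x)\,I$ one derives that the coefficients $A_k$ of $\operatorname{adj}(tI - x) = \sum_k A_k t^k$ are monic polynomials in $x$ of degree $n-1-k$, so each $q_k = x A_k e_1$ lies in $k[x]\cdot e_1 = F_x$. Conversely, reading off $q_{n-1} = xe_1$, $q_{n-2} = x^2 e_1 + c_{n-1} xe_1$, and so on in a triangular fashion down to $q_0 = -c(0)\, e_1$ (by Cayley--Hamilton), and noting that $c(0) = \pm \det x \neq 0$, we deduce $W \supseteq \operatorname{span}(e_1, xe_1, \ldots, x^{n-1} e_1) = F_x$. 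Thus $W = F_x$; since $e_1 \in F_x$ and $\dim F_x = m$, we conclude $\operatorname{rank}(l_x) = m - 1$. The main obstacle is the identification $W = F_x$, in particular the containment $e_1 \in W$ which uses Cayley--Hamilton (equivalently, the fact that $\det u = 1$ forces the constant-term coefficient of $l_x(u)$ to vanish); everything else is routine linear algebra.
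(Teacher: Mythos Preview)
Your proof is correct and takes a genuinely different route from the paper's argument.

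The paper proceeds by first $Q$-conjugating $x$ into the block form \eqref{x} with $x_F$ a companion matrix (which is legitimate since $U_Q$ is normal in $Q$ and $c$ is conjugation-invariant, so the rank of $l_x$ is a $Q$-conjugation invariant). It then decomposes $u=u_Lu_U$ along $U_Q=(U_Q\cap L_P)(U_Q\cap U_P)$ and establishes by an explicit matrix computation (Lemma~\ref{charpoly}) that $c(ux)=c(u_Lx_F)c(x_E)$, together with exact formulae for the coefficients $b_r$ of $c(u_Lx_F)$ in terms of the entries $v_i$ of $u_L$. Linearity and the rank $m-1$ are then read off from the triangular shape of these formulae.

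Your argument avoids any normal form: the rank-one perturbation $ux=x+e_1(\tilde v^Tx)$ plus the adjugate identity gives $l_x(u)=-\tilde v^T q(t)$ in one stroke, and the identification of the column span $W$ of the coefficient vectors of $q(t)$ with the cyclic subspace $F_x=k[x]e_1$ (using Cayley--Hamilton for the inclusion $e_1\in W$ via $q_0=-c(0)e_1$) yields the rank directly as $\dim F_x-\dim(F_x\cap F_1)=m-1$. This is more intrinsic and explains conceptually why the rank is governed by $F_x$. On the other hand, the paper's explicit formula $c(ux)=c(u_Lx_F)c(x_E)$ carries extra information: it shows not only the rank but that $c|_{U_Qx}$ is an affine isomorphism onto the fibre of the determinant map over $c(x_E)$, which is exactly what is invoked later in the proof of Proposition~\ref{vanishing-mirabolic}. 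Your method proves the proposition as stated, but if you wanted that stronger statement you would still need the factorisation $c(x)=c(x_F)c(x_E)$ and the description of the image.
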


For every $x\in X_m$ is $Q$-conjugate to a matrix of the form \eqref{x}
with $x_F$ being a companion matrix as in \eqref{companion}, we can assume 
that the matrix $x$ is of this special form. In particular we have $x\in P$ where 
$P$ the parabolic group which preserves the subspace $F_m$. Let $U_P$ and $L_P$ denote respectively its unipotent radical and the standard Levi component. 
Every element $u\in U_Q$ can be written uniquely in the form $u=u_L u_U$ where $u_L\in U_Q\cap L_P$ and $u_U\in U_Q \cap U_P$ 
where $u_L$ is a matrix of the form
\begin{equation} \label{U-L}
	u_L=\left[
	\begin{array}{c|c|c}
		1 & v & 0 \\ \hline
		0 & I_{m-1} & 0 \\ \hline
		0 & 0 & I_{n-m}	
	\end{array}
	\right]
\end{equation}
where $v=(v_1,\ldots,v_{m-1})\in \mathbb{A}^{m-1}$ is a row vector. The proposition can now be derived from a matrix calculation.

\begin{lemma} \label{charpoly}
For $x\in X_m$ of the form \eqref{x} with $x_F$ being a companion matrix as in \eqref{companion} and $u\in U_Q$ with $u=u_L u_U$ as above, we have
$$c(ux)=c(u_L x_F) c(x_E).$$
Moreover, if we write $u_L$ in coordinates $(v_1,\ldots,v_{m-1}) \in V=\mathbb{A}^{m-1}$ as above, and write $c(u_L x_F) -c_L(x_F)$ in coordinates $(a_1,\ldots,a_m) \in A=\mathbb{A}^m$ that are coefficients of the characteristic polynomials, then 
the application $u_L \mapsto c(u_L x_F) -c_L(x_F)$ induces an linear isomorphism between $V$ and the subspace of $A$ defined by the equation $a_m=0$.
\end{lemma}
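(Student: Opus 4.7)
The plan is to exploit the parabolic structure of $P$ together with the companion-matrix form of $x_F$. First I would decompose $u=u_Lu_U$ as in \eqref{U-L}. Since $u_U\in U_P$ and $x\in P$ we have $u_Ux\in P$, and $u_L\in L_P\subset P$ lies in the Levi; so writing $u_L$ in block form with upper-left $m\times m$ block $g_F=\left[\begin{smallmatrix}1 & v\\0 & I_{m-1}\end{smallmatrix}\right]$ and lower-right block $I_{n-m}$, a direct block multiplication gives
$$ux=\begin{bmatrix} g_Fx_F & * \\ 0 & x_E \end{bmatrix}.$$
Since the characteristic polynomial of a block upper-triangular matrix factors as the product of the characteristic polynomials of its diagonal blocks, this yields $c(ux)=c(g_Fx_F)\,c(x_E)=c(u_Lx_F)\,c(x_E)$, establishing the first assertion.

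Next I would verify that the map $\phi\colon v\mapsto c(u_Lx_F)-c(x_F)$ is linear. Writing $u_L=I_F+e_1\otimes w$ with $w=(0,v_1,\ldots,v_{m-1})$, one has $u_Lx_F-x_F=e_1\otimes(wx_F)$, a rank-one perturbation, and the matrix determinant lemma gives
$$\det(tI-u_Lx_F)=\det(tI-x_F)\bigl(1-wx_F(tI-x_F)^{-1}e_1\bigr),$$
from which linearity in $w$ (hence in $v$) is manifest. Moreover $\det(u_L)=1$ forces $\det(u_Lx_F)=\det(x_F)$, so the constant term $a_m$ of the characteristic polynomial is preserved; equivalently, the image of $\phi$ lies in the codimension-one subspace $\{a_m=0\}\subset A$.

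For the isomorphism statement I would compute $\phi$ explicitly. Since $e_1$ is a cyclic vector for $x_F$ with $x_F^{j-1}e_1=e_j$ for $1\le j\le m$, an expansion of $\det(tI-u_Lx_F)$ along its last column (or, equivalently, using the formula above) shows that the coefficient of $v_k$ in $c(u_Lx_F;t)-c(x_F;t)$ is the polynomial
$$-\bigl(t^{m-k}+a_1t^{m-k-1}+\cdots+a_{m-k-1}t\bigr).$$
Writing $c(u_Lx_F)-c(x_F)=\delta_1t^{m-1}+\cdots+\delta_{m-1}t+\delta_m$, this confirms $\delta_m=0$ and shows that the matrix expressing $(\delta_1,\ldots,\delta_{m-1})$ in terms of $(v_1,\ldots,v_{m-1})$ is lower triangular with $-1$'s on the diagonal, hence invertible. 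This gives the second assertion.

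The main obstacle is the explicit coefficient calculation in the last step: although conceptually it amounts only to the cyclicity of $e_1$ together with the companion-matrix shape of $x_F$, identifying the triangular structure requires careful index bookkeeping. Everything else—the block-triangular form of $ux$, linearity via the matrix determinant lemma, and preservation of the constant term via $\det(u_L)=1$—is essentially formal.
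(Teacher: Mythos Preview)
Your proof is correct and follows essentially the same route as the paper: reduce the first assertion to the block upper-triangular shape of $ux\in P$, and for the second assertion compute $c(u_Lx_F)$ explicitly to exhibit the lower-triangular dependence of the coefficient shifts $\delta_r$ on $(v_1,\ldots,v_{m-1})$. The paper simply writes down the product $u_Lx_F$ and reads off $b_r=a_r+v_r+\sum_{i=1}^{r-1}a_iv_{r-i}$; your detour through the matrix determinant lemma (and the observation $\det u_L=1\Rightarrow \delta_m=0$) is a pleasant conceptual repackaging but lands on the same explicit triangular matrix, so the two arguments are interchangeable. The sign discrepancy between your $-1$'s on the diagonal and the paper's $+1$'s stems from the paper's own inconsistency (it defines $u_L$ with entries $+v_i$ in \eqref{U-L} but computes with $-v_i$ in the proof); either convention gives an invertible triangular matrix, so nothing is affected.
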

 
\begin{proof} By direct calculation, we find the following formula for the
characteristic polynomial of 
$$u_L x_F=	\left[
	\begin{array}{c|ccc}
		1 & -v_1 & \cdots & -v_{m-1} \\ \hline
		0 &  & & \\
		\vdots & & I_{m-1} & \\ 
		0 & 
	\end{array}
	\right]
	\left[
	\begin{array}{ccc|c}
		0 & \cdots & 0 & -a_m \\ \hline
		  &  &         & -a_{m-1} \\
		  & \mathrm{I}_{m-1} & & \vdots \\
		  & & & -a_1
	\end{array}
	\right].$$
We have 
$$c(u_L x_F)=t^m+b_1 t^{m-1} +\cdots + b_{m-1}t+ b_m$$
where $a_m=b_m$ and for $1\leq r\leq m-1$
$$b_r=a_r+\sum_{i=1}^{r-1} a_i v_{r-i}+ v_r.$$ 
The lemma follows.
\end{proof}

\section{Proof of Theorem \ref{BKGL}}

We will deduce Theorem \ref{BKGL} from the analogous statement for the mirabolic subgroup $Q$ that ($\dagger$) if $g\in G-Q$, then $\mathrm{H}_c^*(U_Qg,\Phi_\rho|_{U_Qg})=0$.

To this end take $g\in G-B$, there are two cases: $g\in Q$ or $g\notin Q$.

If $g\notin Q$, by ($\dagger$) we know that $\mathrm{H}_c^*(U_Qg,\Phi_\rho|_{U_Qg})=0$. In fact for all $u\in U_B$, $ug\notin Q$ so that more generally we have $\mathrm{H}_c^*(U_Q u g,\Phi_\rho|_{U_Qug})=0$ for all $u\in U_B$. Now one can establish the vanishing of $\mathrm{H}_c^*(U_B g,i^*\Phi_\rho)$ by using the Leray spectral sequence associated with the morphism $U_Bg \to U_Q \backslash (U_Bg)$.

Now we consider the case $g\in Q$. Let $L_Q$ denote the standard Levi factor of $Q$, $g_L$ the image of $g$ in $L_Q$. We have $g_L \notin B\cap L_Q$. Using the definition of the restriction functor, we have
$$\mathrm{H}^*_c(U_B g,i^*\Phi_\rho)= \mathrm{H}^*_c((U_B\cap L)g_L, \mathrm{Res}_L^G(\Phi_\rho)|_{(U_B\cap L)g_L})$$
where $\mathrm{Res}_L^G(\Phi_\rho)= \Phi_{L,\rho|_{\check L}}$ by Proposition \ref{restriction lemma}. At this point we can conclude by an induction argument.

It remains to establish ($\dagger$). For convenience of induction we will prove the following equivalent proposition:
\begin{proposition}\label{vanishing-mirabolic}
Let $G$ be a direct product of general linear groups and $Q$ a mirabolic subgroup of $G$ of the form
$$Q=\prod_{i\neq j}\mathrm{GL}(n_i)\times Q_j\subset\prod_i\mathrm{GL}(n_i)=G,$$
let $\rho$ be a $\sigma$-positive representation of $\check{G}$ where $\sigma$ denotes the product of the characters $\mathrm{det}_i:\mathrm{GL}_{n_i}(\mathbb{C})\to\mathbb{C}^\times$. If $x$ is a geometric point of $G-Q$, then
$$\mathrm{H}_{c}^*(U_Qx,i^*\Phi_\rho)=0$$
where $i:U_Qx\to G$ denotes the inclusion map.
\end{proposition}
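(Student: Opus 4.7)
The plan is to argue by induction on $n_j$, the dimension of the general-linear factor of $G$ in which the mirabolic $Q_j$ sits. The base case $n_j = 1$ is vacuous since then $Q_j = \mathrm{GL}(1) = G_j$. For the inductive step, given $x \in G - Q$, necessarily $x_j \in X_m^{(j)}$ for some $m \in \{2,\ldots,n_j\}$. Since $\Phi_\rho$ is $G$-conjugation invariant and the stabilizer $Q_{1,j}$ normalizes $U_Q$, conjugating $x$ by an element of $Q_{1,j}$ preserves $\mathrm{H}^*_c(U_Q x,\Phi_\rho)$. Applying Theorem \ref{Bernstein-stratification}, I may therefore assume $x_j$ has the special form $\left(\begin{smallmatrix} x_F & y \\ 0 & x_E \end{smallmatrix}\right)$ with $x_F$ a companion matrix of size $m$ and $x_E\in\mathrm{GL}(n_j-m)$. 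In particular $x$ lies in the standard parabolic $P = \prod_{i\neq j}\mathrm{GL}(n_i)\times P_j$ with Levi $L = \prod_{i\neq j}\mathrm{GL}(n_i)\times\mathrm{GL}(F)\times\mathrm{GL}(E)$.

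Decompose $U_Q = U_L\times U_U$ as abelian groups, where $U_L = U_Q\cap L_{P_j}\cong \mathbb{A}^{m-1}$ is the mirabolic unipotent of $\mathrm{GL}(F)\subset L$ and $U_U = U_Q\cap U_{P_j}\cong \mathbb{A}^{n_j-m}$ is contained in the parabolic unipotent $U_{P_j}$. The crucial input from Lemma \ref{charpoly} is that $c(ux) = c(u_L x_F)\cdot c(x_E)$, depending only on the $u_L$-component; equivalently, $c$ is constant on every $U_{P_j}$-coset meeting $U_L x$. Since on $G^{\mathrm{rss}}$ the sheaf $\Phi_\rho$ is the pullback of $\Phi_{\underline\lambda,S^{\mathrm{rss}}}$ via $c$, this forces $\Phi_\rho$ to be generically constant along $U_{P_j}$-orbits through points of $U_L x$, hence along the $U_U$-fibers of the projection $p:U_Q x\to U_L x$.

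A Fubini-type argument along $p$, combined with Proposition \ref{restriction lemma} applied to $P$ and $L$ (which identifies $\mathrm{H}^*_c(U_{P_j}\cdot u_L x,\Phi_\rho) = \Phi_{\rho|_{\check L},L}|_{u_L l}$), yields up to a Tate twist from the $U_{P_j}/U_U$-cohomology,
$$\mathrm{H}^*_c(U_Q x,\Phi_\rho) \;\simeq\; \mathrm{H}^*_c(U_L\cdot l,\;\Phi_{\rho|_{\check L},L})\otimes \overline{\mathbb{Q}}_\ell((m-1)(n_j-m))[2(m-1)(n_j-m)],$$
where $l=\pi_P(x)\in L$. Since $x_F$ is a companion matrix of size $m\geq 2$, $x_F e_1 = e_2 \notin\langle e_1\rangle$, so $l$ lies outside the mirabolic of $L$ determined by $U_L$. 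When $m<n_j$, the GL-factor of $L$ housing the mirabolic has dimension $m<n_j$, and the inductive hypothesis applied to $L$ forces the right-hand side to vanish.

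The main obstacles are twofold. First, the Fubini step requires controlling the intermediate extension of $\Phi_\rho$ across the non-regular-semisimple locus; concretely, one must show that the restriction of $\Phi_\rho$ to a $U_{P_j}$-coset is the constant perverse sheaf, not merely generically constant. This should follow from the fact that a constant local system on the regular-semisimple open of an affine space extends uniquely to the constant local system on the whole space. Second, the case $m=n_j$ is not reduced by the above induction, since then $U_{P_j}$ is trivial and the Levi $L$ still contains a $\mathrm{GL}(n_j)$-factor. In this residual case, Lemma \ref{charpoly} shows $c|_{U_Q x}$ is an affine isomorphism onto the hyperplane $\{a_{n_j}=\det(x_j)\}\subset S_j\cong\mathbb{A}^{n_j}$, and the desired vanishing translates to a cohomology calculation for $\Phi_{\underline\lambda,S^{\mathrm{rss}}}$ along that hyperplane; this should be achievable using the hypergeometric sheaf vanishing properties of Section \ref{section hypergeometric sheaves} together with the $\sigma$-positivity of $\rho$.
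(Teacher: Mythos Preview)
Your overall strategy—stratify by $X_m$, normalize $x$, and reduce to a Levi—matches the paper's, but two genuine gaps remain. The most serious is in the residual case $m=n_j$: the vanishing there is not a consequence of hypergeometric-sheaf acyclicity or $\sigma$-positivity as you suggest, but of the sign character. Over the open stratum $X_{n_j}\subset G^{\mathrm{reg}}$ the $\gamma$-sheaf is pulled back via $c$ from $(q_*\Phi_{\rho'})^{W_j}$, so by Lemma~\ref{charpoly} the cohomology along $U_Q x$ becomes a stalk of $\sigma_!(\Phi_{\rho'}{}^{W_j})$. The decisive observation—which your proposal does not supply—is that with the twisted action \eqref{action-of-W}, $W_j$ acts on $\sigma_!\Phi_{\rho'}$ through $\mathrm{sign}_{n_j}$, which is nontrivial for $n_j\geq 2$, so the $W_j$-invariants vanish. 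This sign-character cancellation is the heart of the entire proof; the paper establishes it first and then reduces every other stratum to it.

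Your Fubini step for $2\leq m<n_j$ also has a gap. You want to pass from $\mathrm{H}^*_c(U_U u_L x,\Phi_\rho)$ to $\mathrm{H}^*_c(U_{P_j} u_L x,\Phi_\rho)$ up to a Tate twist by asserting that $\Phi_\rho$ is constant along $U_{P_j}$-cosets, but restricting an intermediate extension to a subvariety is not again an intermediate extension, so ``generically constant'' does not imply ``constant''; your proposed fix does not close this. The paper sidesteps the issue entirely via Lemma~\ref{unipotent-conjugation}: the map $(u_Q,u_{m-1})\mapsto u_{m-1}u_Q x u_{m-1}^{-1}$ is an isomorphism $U_Q\times U_{m-1}\xrightarrow{\sim} U_P U_L x$, and conjugation-equivariance of $\Phi_\rho$ then gives $\mathrm{H}^*_c(U_P U_L x,\Phi_\rho)\simeq \mathrm{H}^*_c(U_Q x,\Phi_\rho)\otimes \mathrm{H}^*_c(\mathbb{A}^{(m-1)(n_j-m)})$ directly. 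Combined with Proposition~\ref{restriction lemma} and the already-established open-stratum case applied to $\mathrm{GL}(F)$ inside the Levi (where $x_F$, being a companion matrix, automatically lies in the open stratum), this finishes the argument without the constancy claim.
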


\begin{proof}
Argue by induction on the semisimple rank of $G$. In the base case $G$ is a torus, hence the proposition holds vacuously.

	Otherwise $n_j\geq2$, consider the stratification induced by \eqref{stratification} on $\mathrm{GL}(n_j)$: $$G=\bigsqcup_m X_m=\bigsqcup_{m=1}^{n_j} \bigg(\prod_{i\neq j}\mathrm{GL}(n_i)\times X_{j,m}\bigg).$$ For $x\notin Q$ with $Q=X_1$, we have $x\in X_m$ for $2\leq m \leq n_j$. 
	
	We first consider the case $x\in X_{n_j}$. For $X_{j,n_j}$ is contained in the open subset $\mathrm{GL}(n_j)^\mathrm{reg}$ of $\mathrm{GL}(n_j)$ (see \cite{S65}), we have a Cartesian diagram
$$\begin{tikzcd}
\tilde X_{n_j} \arrow{r}{\tilde c} \arrow{d}[swap]{\tilde q}
& \prod_{i\neq j}\mathrm{GL}(n_i)\times T_j \arrow{d}{q} \\
X_{n_j} \arrow{r}[swap]{c}
& \prod_{i\neq j}\mathrm{GL}(n_i)\times T_j/W_j
\end{tikzcd}$$
where $c$ is smooth and $q$ is finite.  It follows that the restriction of the $\gamma$-sheaf $\Phi_\rho$ to $X_{n_j}$ can be identified with a pullback by the characteristic polynomial map
$$\Phi_\rho|_{X_{n_j}}=c^* q_* \Phi_{\rho'}{}^{W_j}$$
where $\rho'$ denotes the restriction of $\rho$ to the subgroup $$\prod_{i\neq j}\mathrm{GL}_{n_i}(\mathbb{C})\times\check{T_j}\subset\check{G}.$$

We recall that the coordinate ring $T_j/W_j=\mathbb{A}^{n_j-1}\times \mathbb{G}_m$ is the ring of $\mathrm{GL}(n_j)$-invariant functions on $\mathrm{GL}(n_j)$, the projection $\sigma_{W_j}:T_j/W_j\to \mathbb{G}_m$ corresponds to the determinant function. 
By Lemma \eqref{charpoly}, the restriction of $c$ to $U_Q x$ induces an isomorphism between $U_Q x$ and the fiber of the determinant map on the $j$th component $$\sigma_{W_j}:\prod_{i\neq j}\mathrm{GL}(n_i)\times T_j/W_j \to \prod_{i\neq j}\mathrm{GL}(n_i)\times\mathbb{G}_m$$ over the image of $x$. 
Thus, to prove the proposition, it is enough to prove that
$$\sigma_!\Phi_{\rho'}{}^{W_j}= \sigma_{W_j,!}  q_! \Phi_{\rho'}{}^{W_j} =0$$
where $$\sigma:\prod_{i\neq j}\mathrm{GL}(n_i)\times T_j \to \prod_{i\neq j}\mathrm{GL}(n_i)\times\mathbb{G}_m$$ is the determinant map on the $j$th component.

Now recall the definition of the hypergeometric sheaf $\Psi_{\underline\lambda}=\mathrm{p}_{\underline\lambda,!} \mathrm{tr}^* \mathcal{L}_\psi$ with homorphism $\mathrm{p}_{\underline \lambda}:\mathbb{G}_m^r\to T$ given by  
$$\mathrm{p}_{\underline\lambda}(t_1,\ldots,t_r)=\prod_{i=1}^r \lambda_i(t_i).$$
We have
$$\sigma_!\Phi_{\rho'} = \sigma_! \mathrm{Ind}_T^{G'} \Psi_{\underline\lambda} {}^{\prod_{i\neq j}W_i}$$
where $G'=\prod_{i\neq j}\mathrm{GL}(n_i)\times T_j$.

With the definition of the action of $W$ on $\Psi_{\underline\lambda}$ given by \eqref{action-of-W}, we see that the induced action of $W$ on $\sigma_! \Phi_{\rho'}$ is through the character $\mathrm{sign}_{n_j}:W_j\to \{\pm 1\}$. It follows that $\sigma_! \Phi_{\rho'}{}^{W_j}=0$ because for $n_j\geq 2$ the sign character $\mathrm{sign}_{n_j}$ is nontrivial. This concludes the case $x\in X_{n_j}$.

We now consider the general case $x\in X_m$ with $2\leq m\leq n_j$. By $Q$-conjugation we can assume that the $j$th component $x_j$ of $x$ is of the form \eqref{x} with $x_{j,F}$ being a companion matrix as in \eqref{companion}. Let $P$ denote the standard parabolic of block matrices as in \eqref{x}, $L$ the standard Levi factor of $P$ and $U_P$ its unipotent radical. By applying the result obtained above in the generic case to $\mathrm{GL}(F)$, we get
\begin{equation} \label{vanishing-Levi}
	\mathrm{H}^*_c(U_L x_L,\Phi_{L,\rho|_{\check L}}|_{U_L x_L})=0
\end{equation}
where $x_L$ is the image of $x$ in $L$, $U_L$ consists of unipotent matrices of the form \eqref{U-L}, and $\Phi_{L,\rho|_{\check L}}$ is the $\gamma$-sheaf on $L$ associated to the restriction to $\check L$ of the representation $\rho$ of $\check G$. 

For $\Phi_{L,\rho|_{\check L}}=\mathrm{Res}_L^G (\Phi_\rho)$ by Proposition \ref{restriction lemma}, \eqref{vanishing-Levi} implies that
\begin{equation} \label{vanishing-restriction}
	\mathrm{H}^*_c(U_P U_L x,\Phi_{\rho}|_{U_P U_L x})=0
\end{equation}
where $U_P$ and $U_L$ commute. With the help of Lemma \ref{unipotent-conjugation}, we see that the  morphism
$$U_{Q} \times U_{m-1} \to U_P U_L x$$ 
given by
$$(u_{m-1},u_Q) \mapsto u_{m-1}u_Q x u_{m-1}^{-1}$$
is an isomorphism. Now using the fact that $\Phi_\rho$ is equivariant under the adjoint action, \eqref{vanishing-restriction} implies that
$$\mathrm{H}^*_c(U_Q x,\Phi_{\rho}|_{U_Q x})=0.$$
This concludes the proof of Proposition \ref{vanishing-mirabolic} and therefore Theorem \ref{BKGL}.
\end{proof}

\appendix

\section{The case of parabolic subgroups}

In this appendix we give a proof for the extension of Theorem \ref{BKGL} to arbitrary parabolic subgroups $P$ of $G=\mathrm{GL}(n)$. By a similar argument involving the Leray spectral sequence as in the beginning of the proof of Theorem \ref{BKGL}, we are reduced to the case when $P$ is a maximal parabolic subgroup.

\begin{proposition} 
Let $P$ denote the maximal standard parabolic subgroup of $G=\mathrm{GL}(n)$ consisting of block matrices of size $(n_1,n_2)$ where $n_1+n_2=n$ and $U_P$ its unipotent radical, let $\rho$ be a $\sigma$-positive representation of $\check{G}$ where $\sigma=\mathrm{det}$. If $g$ is a geometric point of $G-P$, then
$$\mathrm{H}^*_c(U_Pg,i^*\Phi_\rho)=0$$
where $i:U_Pg\to G$ denotes the inclusion map.
\end{proposition}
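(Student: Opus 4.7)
The plan is to adapt the strategy of Proposition \ref{vanishing-mirabolic} to the setting of a maximal parabolic $P\subset G=\mathrm{GL}(n)$ of type $(n_1,n_2)$. Let $F=F_{n_1}$ be the subspace stabilized by $P$ and set $E=V/F$, so that $U_P=\mathrm{Hom}(E,F)$. The role played by the distinguished vector $v=e_1$ in Section \ref{mirabolic} is now played by the subspace $F$ itself. For $g\in G$ I define the invariant $r(g)=\mathrm{rank}\bigl(F\hookrightarrow V\xrightarrow{g}V\twoheadrightarrow E\bigr)$, taking values in $\{0,1,\ldots,\min(n_1,n_2)\}$. This gives a stratification $G=\bigsqcup_{r\geq 0}X_r$ with $X_0=P$ which is invariant under left translation by $U_P$: any $u\in U_P$ fixes $F$ pointwise and induces the identity on $V/F$, so $r(ug)=r(g)$. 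For $g\notin P$ one has $g\in X_r$ for some $r\geq 1$, and the proposition reduces to showing $\mathrm{H}_c^*(U_Pg,\Phi_\rho|_{U_Pg})=0$ stratum by stratum.

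For the top stratum $r_{\max}=\min(n_1,n_2)$, I would establish an analogue of Theorem \ref{Bernstein-stratification} identifying $[U_P\backslash X_{r_{\max}}]$ with a quotient stack involving the Levi $L=\mathrm{GL}(F)\times\mathrm{GL}(E)$, a companion-matrix-type affine factor encoding the characteristic polynomial, and residual affine pieces. In this presentation the restriction of $\Phi_\rho$ is a pullback via the characteristic polynomial map, and the argument of the $X_{n_j}$-case of Proposition \ref{vanishing-mirabolic} applies: the pushforward $\sigma_!\Phi_{\rho'}$ along the relevant fibration carries an action of the Weyl group factor $W_{n_1}\times W_{n_2}$ through the sign character via \eqref{action-of-W}, whose invariants vanish as soon as $\min(n_1,n_2)\geq 2$. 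The boundary case $\min(n_1,n_2)=1$ coincides either with the mirabolic itself (when $n_1=1$) or, by transposition, with the stabilizer of a hyperplane, so the vanishing is a direct consequence of Proposition \ref{vanishing-mirabolic}.

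For intermediate strata $1\leq r<r_{\max}$, an analogue of Lemma \ref{unipotent-conjugation} provides a standard form for matrices in $X_r$ which decomposes the $U_P$-action as a product of a ``translation'' part and a Levi-conjugation part. Combined with Proposition \ref{restriction lemma}, this reduces $\mathrm{H}_c^*(U_Pg,\Phi_\rho|_{U_Pg})$ to a vanishing statement for $\Phi_{L,\rho|_{\check L}}=\mathrm{Res}_L^G(\Phi_\rho)$ on the Levi $L$. The image $g_L\in L=\mathrm{GL}(n_1)\times\mathrm{GL}(n_2)$ is seen, from the position of $g$ in $X_r$, to lie outside a suitable mirabolic subgroup of the product group $L$, and Proposition \ref{vanishing-mirabolic} applied to $L$ then completes the induction. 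The principal technical obstacle lies in establishing this Bernstein-type structural result: unlike in the mirabolic case, where a single cyclic vector $v=e_1$ generates $F$ under iteration of $g$ and the companion matrix \eqref{companion} provides a canonical normal form, no such distinguished vector exists when $\dim F>1$. One must instead fix the full flag $F_1\subset F_2\subset\cdots\subset F_{n_1}=F$ and analyze the induced maps $F_j\to V/F$ across this flag, extending the filtration argument of Lemma \ref{unipotent-conjugation} accordingly.
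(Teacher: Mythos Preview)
Your proposal has a genuine gap in the top-stratum argument. In the mirabolic case the open stratum $X_n$ lies inside $G^{\mathrm{reg}}$, and Lemma \ref{charpoly} shows that the characteristic-polynomial map restricts to an isomorphism from each coset $U_Q x$ onto a fiber of the determinant $\sigma_{W_j}:T_j/W_j\to\mathbb{G}_m$; this is precisely what makes the sign-character cancellation go through. Neither feature survives for a parabolic of type $(n_1,n_2)$ with $\min(n_1,n_2)\geq 2$. First, your top rank stratum $X_{r_{\max}}$ does not consist of regular elements: already for $n_1=n_2=2$ the permutation matrix swapping the two blocks has full-rank lower-left corner yet is not regular, so $\Phi_\rho$ is not of the form $c^*(\cdot)$ there. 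Second, and more decisively, $\dim U_P g=n_1 n_2$ while the fiber of the determinant inside $T/W\simeq\mathbb{A}^{n-1}\times\mathbb{G}_m$ has dimension $n_1+n_2-1$; these agree only when $\min(n_1,n_2)=1$. So the analogue of Lemma \ref{charpoly} you need simply cannot hold, and without it there is no map along which to push $\Phi_{\rho'}$ and invoke the $W$-sign argument of \eqref{action-of-W}. The structural result you flag as the ``principal technical obstacle'' is therefore not merely unproven but, in the form your argument requires, false.

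The paper takes an entirely different route: induction on $n_1$. For $n_1=1$ one is back to Proposition \ref{vanishing-mirabolic}. For $n_1\geq 2$ one introduces the auxiliary maximal parabolic $P'$ of type $(n_1-1,n_2+1)$. Since $g\notin P$, the lower-left block of $g$ is nonzero, and conjugation by the Levi $L$ of $P$ (whose orbits on that block are classified by rank) brings $g$ to a form $g'$ whose lower-left block has a $1$ in the top-right entry with zeros beneath it; this forces $g'\notin P'$. By the inductive hypothesis $\mathrm{H}^*_c(U_{P'}g',\Phi_\rho)=0$. A computation parallel to Lemma \ref{unipotent-conjugation} and the end of the proof of Proposition \ref{vanishing-mirabolic} then upgrades this to vanishing on the sub-coset $(U_P\cap U_{P'})g'$, and the Leray spectral sequence for $U_P g'\to (U_P\cap U_{P'})\backslash U_P g'$ finishes. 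The key idea is to peel off one row of $U_P$ at a time, reducing to a parabolic with smaller $n_1$, rather than attempting a direct Bernstein-type analysis of the full $U_P$-action.
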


\begin{proof}
Argue by induction on $n_1$. In the base case when $n_1=1$ we are reduced to Proposition \ref{vanishing-mirabolic}.

Otherwise $n_1\geq2$, let $P'$ denote the maximal standard parabolic subgroup of $\mathrm{GL}(n)$ consisting of block matrices of size $(n_1-1,n_2+1)$ and $U_{P'}$ its unipotent radical. Let $$g'=\left[\begin{array}{c|c}
    a&b\\\hline c&d
\end{array}\right]$$ be a block matrix of size $(n_1,n_2)$ such that the $n_2\times n_1$-matrix $c$ is of the form $$b=\left[\begin{array}{ccc|c}
    * & \cdots & * & 1 \\\hline
    * & \cdots & * & 0 \\
    \vdots & \ddots & \vdots & \vdots \\
    * & \cdots & * & 0
\end{array}\right],$$ then by an analogous computation as in the proof of Lemma \ref{unipotent-conjugation} and the end of the proof of Proposition \ref{vanishing-mirabolic}, we see that acyclicity of $\Phi_\rho$ over $U_{P'}g$:
\begin{equation}\label{vanishing-maximal-parabolic}
    \mathrm{H}_c^*(U_{P'}g',\Phi_\rho|_{U_{P'}g'})=0
\end{equation}
implies acyclicity over the subcoset $(U_P\cap U_{P'})g$: $$\mathrm{H}_c^*\big((U_P\cap U_{P'})g',\Phi_\rho|_{(U_P\cap U_{P'})g'}\big)=0,$$ which then implies the proposition by the Levy spectral sequence.

Now it remains to prove \eqref{vanishing-maximal-parabolic}. To this end it suffices to observe that $g$ is conjugate, under the action of the standard Levi factor $L$ of $P$, to a matrix of the form $g'$, which is in addition not contained in $P'$. This follows from the fact that the $L$-orbits on $c$ are classified by the rank of $c$. Hence we are done by induction.
\end{proof}

\section{Positive hypergeometric sheaves}\label{appendix-positive-hypergeometric-sheaves}

In this appendix we give a proof for Proposition \ref{positive-hypergeometric-sheaves} concerning hypergeometric sheaves $\Psi_{\underline{\lambda}}$ when $\underline{\lambda}$ is $\sigma$-positive. The first half of Proposition \ref{positive-hypergeometric-sheaves} is due to Braverman--Kazhdan in \cite[Theorem 4.2]{BK02}:

\begin{proposition}\label{cleanness lemma}
If $\underline{\lambda}$ is $\sigma$-positive, then the forget support morphism $\Psi_{\underline{\lambda}}\to\Psi_{\underline{\lambda}}^*$ is an isomorphism.
\end{proposition}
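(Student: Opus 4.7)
The plan is to verify that the canonical morphism $\Psi_{\underline\lambda}\to\Psi^*_{\underline\lambda}$ is an isomorphism stalk-by-stalk, via a fiberwise toric compactification argument governed by $\sigma$-positivity. I first reduce to the case where $\mathrm{p}_{\underline\lambda}:\mathbb{G}_m^r\to T$ is surjective by replacing $T$ with the image subtorus $T'=\mathrm{im}(\mathrm{p}_{\underline\lambda})$; this is harmless because closed immersions commute with both $!$- and $*$-pushforwards. The map $\mathrm{p}_{\underline\lambda}$ then becomes a principal $K$-bundle for its (diagonalisable) kernel $K$, whose identity component $K^0\subset\mathbb{G}_m^r$ is a subtorus of dimension $r-\dim T$. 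After \'{e}tale-local trivialisation of the bundle, the stalks at a geometric point $t\in T$ of both $\mathrm{p}_{\underline\lambda,!}$ and $\mathrm{p}_{\underline\lambda,*}$ applied to $\mathrm{tr}^*\mathcal{L}_\psi$ can be computed fiberwise, and the question reduces to showing that for each $t$ the natural comparison
\[ \mathrm{R}\Gamma_c(F_t,\mathrm{tr}^*\mathcal{L}_\psi|_{F_t})\to \mathrm{R}\Gamma(F_t,\mathrm{tr}^*\mathcal{L}_\psi|_{F_t}) \]
on the fiber $F_t=\mathrm{p}_{\underline\lambda}^{-1}(t)$ is a quasi-isomorphism.

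Trivialising a connected component of $F_t$ by $K^0\cong\mathbb{G}_m^{r-\dim T}$, the trace function pulls back to a Laurent polynomial
\[ f(s_1,\ldots,s_{r-\dim T})=\sum_{i=1}^r c_i\prod_{j=1}^{r-\dim T} s_j^{m_{ij}}, \]
where $c_i\in k^\times$ depend on a chosen basepoint of the component and $(m_{1j},\ldots,m_{rj})_{j}$ is a basis of the cocharacter lattice of $K^0\subset\mathbb{G}_m^r$. The decisive use of $\sigma$-positivity is the following observation: every nonzero cocharacter $(m_1,\ldots,m_r)$ of $K^0$ satisfies $\sum_i m_i\lambda_i=0$ in $\Lambda$, so pairing with $\sigma$ yields $\sum_i m_i n_i=0$ with $n_i=\langle\sigma,\lambda_i\rangle>0$, forcing the integers $m_i$ to take both positive and negative signs unless they are all zero. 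Translating this through the basis $(m_{ij})$, one concludes that the Newton polytope of $f$ contains the origin strictly in its interior.

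The fiberwise cleanness then follows from a standard toric/Artin--Schreier argument. Choosing any smooth projective toric compactification $\overline{F_t}$ of $F_t$ whose fan refines the normal fan of the Newton polytope of $f$, the function $f$ extends to a morphism $\overline{F_t}\to\mathbb{P}^1$ with a pole of strictly positive order along every toric boundary divisor. The pullback $f^*\mathcal{L}_\psi$ is therefore wildly ramified along every boundary stratum, so the standard local vanishing for wildly ramified Artin--Schreier sheaves yields $\bar{j}_!f^*\mathcal{L}_\psi\cong \bar{j}_*f^*\mathcal{L}_\psi$ along the open embedding $\bar{j}:F_t\hookrightarrow\overline{F_t}$. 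Applying the proper structure map $\overline{F_t}\to\mathrm{Spec}\,k$ gives the required fiberwise forget-support comparison, and combining the results over all $t\in T$ delivers the global isomorphism.

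The main obstacle I anticipate is propagating cleanness from generic points of individual boundary divisors (where the one-variable wild-ramification computation is classical) to the deeper toric strata of $\overline{F_t}$ where several divisors meet. This is handled by working in local \'{e}tale coordinates at a toric corner and applying a K\"unneth-style reduction: the interior-point property of the Newton polytope of $f$ is preserved under restriction to each torus orbit of $\overline{F_t}$, so in every local chart around a toric point the function $f$ continues to have nontrivial pole order in each coordinate direction, reducing the cleanness assertion to the one-variable Artin--Schreier vanishing factor-by-factor.
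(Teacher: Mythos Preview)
Your approach is genuinely different from the paper's. The paper does not give a self-contained argument: it attributes the result to \cite[Theorem~4.2]{BK02} and indicates in one sentence that the proof proceeds by pulling back along the smooth surjection $\mathrm{p}_{\underline\lambda}:\mathbb{G}_m^r\to T$ and then invoking the Fourier--Deligne transform on $\mathbb{G}_a^r$. That argument never looks at individual fibers; it works globally on $\mathbb{G}_m^r\subset\mathbb{G}_a^r$, where the Fourier transform turns the comparison of $!$- and $*$-pushforwards into a statement about supports that $\sigma$-positivity makes immediate.

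Your fiberwise Newton-polytope observation is correct and attractive: $\sigma$-positivity really does force the origin into the interior of the Newton polytope of $f$, and this does imply that $f$ has a pole along every torus-invariant boundary divisor of any compatible toric compactification. The difficulty is the step you yourself flag. The proposed ``K\"unneth-style reduction'' at deeper toric strata is not valid as written: $f$ is not a sum of one-variable functions in the local toric coordinates, so there is no tensor decomposition of $\mathcal{L}_\psi(f)$ to which a factor-by-factor Artin--Schreier vanishing applies. More seriously, even away from the corners the argument is incomplete. Having a pole along the divisor $\{u_1=0\}$ only controls the generic behavior; at special points of that divisor the $u_1$-leading coefficient of $f$ can vanish. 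For instance, with $r=4$, $\dim T=2$ and exponent vectors $\mu_1=(1,0),\ \mu_2=(-1,0),\ \mu_3=(-1,1),\ \mu_4=(0,-1)$ (which does arise from a $\sigma$-positive $\underline\lambda$), one obtains $f=c_1s_1+(c_2+c_3s_2)s_1^{-1}+c_4s_2^{-1}$, and the pole in $s_1$ disappears on the locus $s_2=-c_2/c_3$. So the one-variable wild-ramification input is unavailable exactly where you need it, and there is no K\"unneth formula to fall back on.

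The fiberwise statement you are aiming for is in fact true, but establishing it directly requires substantially more than what you have sketched (one route is a careful induction on boundary strata using nearby-cycle functors, in the spirit of Denef--Loeser's work on Newton polyhedra, rather than a product decomposition). This is precisely the kind of difficulty the Fourier--Deligne argument in \cite{BK02} sidesteps: by working upstairs on $\mathbb{G}_a^r$ one trades the delicate local analysis at toric corners for a clean global statement about the Fourier transform of $\mathcal{L}_\psi$ along a linear subspace, where $\sigma$-positivity enters only through a support condition.
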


This is proved by restricting to the smooth neighborhood $\mathrm{p}_{\underline{\lambda}}:\mathbb{G}_m^r\to T$ and then applying the classical Fourier--Deligne transform on $\mathbb{G}_a^r$. The same idea is also crucial in the proof of the second half of Proposition \ref{positive-hypergeometric-sheaves}:

\begin{proposition}
If $\underline{\lambda}$ is $\sigma$-positive, then $\Psi_{\underline{\lambda}}$ is isomorphic to a shift of a local system on the image of $\mathrm{p}_{\underline{\lambda}}:\mathbb{G}_m^r\to T$.
\end{proposition}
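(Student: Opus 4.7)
The plan is to reduce to the case where $\mathrm{p}_{\underline{\lambda}}$ is surjective and then compute $\Psi_{\underline{\lambda}}$ via the Fourier--Deligne transform, in the spirit of Deligne's construction of Kloosterman sheaves \cite{D77s} and Katz's generalizations \cite{K90}.

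First, factor $\mathrm{p}_{\underline{\lambda}}$ as $\mathbb{G}_m^r \twoheadrightarrow T' \hookrightarrow T$ where $T' \subset T$ is the image, a subtorus. Since the closed embedding $T' \hookrightarrow T$ commutes with $!$-pushforward and the property of being a shifted local system on a subvariety is preserved under closed embedding, we may replace $T$ by $T'$ and assume that $\mathrm{p}_{\underline{\lambda}}: \mathbb{G}_m^r \to T$ is surjective. Then $\mathrm{p}_{\underline{\lambda}}$ is a smooth homomorphism of tori with fibers torsors under the kernel $K = \ker \mathrm{p}_{\underline{\lambda}}$, and the task becomes to show that $\Psi_{\underline{\lambda}} = \mathrm{p}_{\underline{\lambda},!}(\mathrm{tr}^*\mathcal{L}_\psi)[r]$ is a local system on $T$ placed in cohomological degree $-\dim T$.

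By smooth base change the stalk of $\Psi_{\underline{\lambda}}$ at a geometric point $t \in T$ is computed by $\mathrm{H}^*_c(K_t, \mathrm{tr}^*\mathcal{L}_\psi|_{K_t})[r]$, where $K_t = \mathrm{p}_{\underline{\lambda}}^{-1}(t)$. Choosing an \'etale-local section $s$ of $\mathrm{p}_{\underline{\lambda}}$ identifies $K_t$ with $K$ via $k \mapsto k \cdot s(t)$, under which the restriction of $\mathrm{tr}$ becomes the linear function $k \mapsto \sum_i s_i(t) k_i$ on $K \subset \mathbb{A}^r$. The family of these cohomologies, as $t$ varies, is therefore (up to shifts and twists) the pullback along $t \mapsto s(t)$ of the Fourier--Deligne transform on $\mathbb{A}^r$ of the extension by zero of the constant sheaf on $K$. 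The cleanness established in the first half of the proposition (Proposition \ref{cleanness lemma}) guarantees that this extension is clean, so that its Fourier--Deligne transform is a shifted local system on a suitable open subset of $(\mathbb{A}^r)^{\vee}$, and the $\sigma$-positivity of the $\lambda_i$ is what forces the image of $s$ to lie in this locus.

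The main obstacle I expect is making the final step of the Fourier--Deligne argument fully rigorous: one must verify that the characteristic variety of the Fourier transform of the clean extension of $K \subset \mathbb{A}^r$ does not meet the image of the \'etale-local section $s$. This is precisely where the $\sigma$-positivity hypothesis, coupled with the cleanness of the first half, has to be used decisively to ensure the required transversality holds uniformly in $t \in T$, not merely over a generic open subset.
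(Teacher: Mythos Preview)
Your Fourier--Deligne reinterpretation is correct: after pulling back along the smooth surjection $\mathrm{p}_{\underline\lambda}$, the sheaf $\mathrm{p}_{\underline\lambda}^*\Psi_{\underline\lambda}$ is indeed the restriction to $\mathbb{G}_m^r$ of the Fourier transform of $\iota_!\overline{\mathbb{Q}}_\ell$, where $\iota:K\hookrightarrow\mathbb{A}^r$ embeds the kernel. But the gap you flag in your last paragraph is not a technicality to be patched---it is the entire content of the proposition. The sentence ``cleanness \ldots\ guarantees that this extension is clean'' is wrong as stated: the extension by zero of the constant sheaf on a subtorus $K\subset\mathbb{A}^r$ is essentially never clean (already $\mathbb{G}_m\subset\mathbb{A}^1$ fails), and Proposition~\ref{cleanness lemma} asserts nothing of the kind. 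What cleanness does give you is that $\Psi_{\underline\lambda}$ is perverse; but perversity alone does not force a sheaf on a torus to be a shifted local system. Your proposed fix via characteristic varieties would still require computing the singular support of $\iota_!\overline{\mathbb{Q}}_\ell$ for an arbitrary subtorus and controlling its behaviour under Fourier transform in positive characteristic, which is no easier than the original problem.

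The paper proceeds by a different mechanism. After reducing further to the case where $K$ is connected, it compactifies $\mathrm{p}_{\underline\lambda}$ via the normalised closure $\overline\Gamma$ of its graph in $(\mathbb{P}^1)^r\times T$ and proves that the compactified map $\overline{\mathrm p}_{\underline\lambda}$ is \emph{universally locally acyclic} with respect to $j_!\,\mathrm{tr}^*\mathcal L_\psi$. Perversity together with ULA then forces $\Psi_{\underline\lambda}$ to be a shifted local system, by a theorem of Deligne. The ULA is established by the Katz--Laumon trick: one realises $j_!\,\mathrm{tr}^*\mathcal L_\psi$ on $\overline\Gamma$ as a direct summand of $\tilde\pi_*\overline{\mathbb{Q}}_\ell$ for the pullback $\tilde\pi$ of the completed Artin--Schreier cover of $(\mathbb{P}^1)^r$ (here $\sigma$-positivity is used, to ensure that the image of $\overline\Gamma$ in $(\mathbb{P}^1)^r$ avoids the origin), reduces to ULA for the constant sheaf, and verifies the latter by an explicit toric coordinate calculation. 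This is the substantive input your proposal does not supply.
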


Without loss of generality we can assume that $\mathrm{p}_{\underline{\lambda}}$ is surjective. Then we can also factorize $\mathrm{p}_{\underline{\lambda}}$ into the product of a homomorphism with connected fibers and an isogeny. Since isogenies preserve local systems under pushforward by proper-smooth base change, without loss of generality we can assume that $\mathrm{p}_{\underline{\lambda}}$ has connected fibers.

We will deduce smoothness of $\Psi_{\underline{\lambda}}$ from universal local acyclicity by Th\'{e}or\`{e}m 5.3.1 in \cite{D77a} together with the fact that $\Psi_{\underline{\lambda}}$ is a perverse sheaf by Proposition \ref{cleanness lemma}, for this we need a compactification of $\mathrm{p}_{\underline{\lambda}}$. Let $\overline{\Gamma}$ be the normalization of the closure of the graph of $\mathrm{p}_{\underline{\lambda}}$ in $(\mathbb{G}_m)^r\times T\subset(\mathbb{P}^1)^r\times T$, let $\mathrm{j}_{\underline{\lambda}}:(\mathbb{G}_m)^r\to\overline{\Gamma}$ be the open inclusion and $\overline{\mathrm{p}}_{\underline{\lambda}}:\overline{\Gamma}\to T$ the projection, then $\overline{\mathrm{p}}_{\underline{\lambda}}$ compactifies $\mathrm{p}_{\underline{\lambda}}$.

We are therefore reduced to the following lemma:

\begin{lemma}
If $\underline{\lambda}$ is $\sigma$-positive and $\mathrm{p}_{\underline{\lambda}}$ is surjective with connected fibers, then $\overline{\mathrm{p}}_{\underline{\lambda}}$ is universally locally acyclic with respect to $\mathrm{j}_{\underline{\lambda},!}\mathrm{tr}^*\mathcal{L}_\psi$.
\end{lemma}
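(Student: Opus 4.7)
The plan is to verify universal local acyclicity of $\overline{\mathrm{p}}_{\underline{\lambda}}$ with respect to $\mathrm{j}_{\underline{\lambda},!}\mathrm{tr}^*\mathcal{L}_\psi$ pointwise on $\overline{\Gamma}$. Over the open dense locus $\mathrm{j}_{\underline{\lambda}}:\mathbb{G}_m^r\hookrightarrow\overline{\Gamma}$ the restriction of $\overline{\mathrm{p}}_{\underline{\lambda}}$ is $\mathrm{p}_{\underline{\lambda}}$, a surjective homomorphism of tori with connected fibers and therefore smooth, and the restriction of the sheaf is the local system $\mathrm{tr}^*\mathcal{L}_\psi$; ULA at every point of $\mathbb{G}_m^r$ is then immediate from smooth base change.

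At a boundary point $x=((\bar t_1,\ldots,\bar t_r),s_0)\in\overline{\Gamma}\setminus\mathbb{G}_m^r$ the stalk of $\mathrm{j}_{\underline{\lambda},!}\mathrm{tr}^*\mathcal{L}_\psi$ vanishes, so ULA at $x$ reduces to the vanishing of the Milnor fiber cohomology of $\overline{\mathrm{p}}_{\underline{\lambda}}$ at $x$ over any geometric point of $T$ specializing to $s_0$, with coefficients in $\mathrm{tr}^*\mathcal{L}_\psi$ extended by zero outside $\mathbb{G}_m^r$. The decisive input from $\sigma$-positivity is extracted by applying $\sigma$ to the identity $\prod_i\lambda_i(t_i)=s$: it yields $\prod_i t_i^{n_i}=\sigma(s)$ with every $n_i>0$, and this extends to $\prod_i\bar t_i^{n_i}=\sigma(s_0)\in\mathbb{G}_m$ on the normalized closure $\overline{\Gamma}$. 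Since $\sigma(s_0)$ is a nonzero finite element of $\mathbb{G}_m$, the boundary entries $\bar t_i\in\{0,\infty\}$ must include both zeros and infinities in order to keep the product finite and nonzero; in particular there exists at least one index $j$ with $\bar t_j=\infty$.

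Once some $\bar t_j=\infty$ has been isolated, the linear form $\mathrm{tr}=\sum_i t_i$ acquires a pole along the corresponding boundary direction of $\overline{\Gamma}$, so $\mathrm{tr}^*\mathcal{L}_\psi$ inherits the wild ramification of $\mathcal{L}_\psi$ at $\infty\in\mathbb{P}^1$ (Swan conductor one). The remaining computation is a standard stationary-phase-type vanishing: one slices the Milnor fiber by the projection to the $t_j$-coordinate and invokes the fact that $j_!\mathcal{L}_\psi$ for $j:\mathbb{G}_m\hookrightarrow\mathbb{P}^1$ has vanishing nearby cycles at $\infty$. Combined with the Leray spectral sequence for this slicing, this yields the desired vanishing of the Milnor fiber cohomology at $x$, and hence ULA at $x$.

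The principal obstacle I anticipate is carrying out this wild-ramification argument rigorously at a general boundary point, whose local structure on the normalization $\overline{\Gamma}$ need not be toric in a chart of $(\mathbb{P}^1)^r$ when the cocharacters $\lambda_i$ have nontrivial relations. A clean way around this is to pass to a smooth toric modification of $\overline{\Gamma}$ adapted to the divisor $\{t_j=\infty\}$, perform the nearby-cycle computation there, and descend to $\overline{\Gamma}$ via proper base change---the same Fourier--Deligne mechanism that already underlies the proof of Proposition~\ref{cleanness lemma}.
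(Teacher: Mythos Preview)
Your outline identifies the correct role of $\sigma$-positivity---forcing at least one coordinate $\bar t_j=\infty$ at every boundary point of $\overline{\Gamma}$---but the ``stationary-phase-type vanishing'' you invoke is not actually carried out, and this is where the argument breaks down. Slicing the Milnor fiber by the single coordinate $t_j$ does not give you a fibration to which the one-variable fact ``$j_!\mathcal{L}_\psi$ has no nearby cycles at $\infty$'' applies: the Milnor fiber lives inside a translate of $\ker(\mathrm p_{\underline\lambda})$, on which $t_j$ is constrained by the other coordinates, and the restriction of $\mathrm{tr}$ to such a slice need not have a simple pole governed by $t_j$ alone. Passing to a smooth toric modification, as you suggest, does not by itself resolve this: you would still need to control how the fibers of the map to $T$ meet each boundary stratum and how the pole order of $\mathrm{tr}$ along that stratum interacts with the normal-crossings structure---essentially a full semistable-reduction plus Swan-conductor computation. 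None of this is ``standard'' without substantial further work.

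The paper avoids this entirely by a trick due to Katz--Laumon: it realizes $\mathrm{j}_{\underline\lambda,!}\mathrm{tr}^*\mathcal{L}_\psi$ as a direct summand of $\tilde\pi_*\overline{\mathbb Q}_\ell$, where $\tilde\pi$ is the base change to $\overline\Gamma$ of the completed Artin--Schreier cover $\pi:(\mathbb P^1)^r\to(\mathbb P^1)^r$ (here $\sigma$-positivity enters only to ensure $\overline\Gamma$ misses the locus where the summand description fails). This reduces ULA for the Artin--Schreier sheaf to ULA for the \emph{constant} sheaf along $\overline{\mathrm p}_{\underline\lambda}\circ\tilde\pi$. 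After the toric trivialization $\overline\Gamma\simeq(\mathbb G_m)^r\times^{K}\overline K$, an explicit coordinate change shows this composite is, chart by chart, built from \'etale and purely inseparable maps over the projection $(\mathbb G_m)^r\times\overline K\to(\mathbb G_m)^r$, for which ULA with constant coefficients is immediate (Deligne's finiteness theorems). The wild ramification is thus absorbed into the finite cover $\tilde\pi$ and never has to be analyzed directly; that is the idea your sketch is missing.
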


\begin{proof}
The following argument is essentially due to Katz--Laumon in \cite{KL85}.

Let $\pi:(\mathbb{P}^1)^r\to(\mathbb{P}^1)^r$ be the completed Artin--Schreier covering defined by $[X:Y]\mapsto[X^q-XY^{q-1}:Y^q]$ on the homogeneous coordinates on each factor $\mathbb{P}^1$ and $\tilde{\pi}$ the base change of $\pi$ along the projection $\overline{\Gamma}\to(\mathbb{P}^1)^r$. It follows from the decomposition theorem and total ramification of $\pi$ at infinity that the $!$-extension of $\mathrm{tr}^*\mathcal{L}_\psi$ to $(\mathbb{P}^1)^r$ is a direct summand of $\pi_*\overline{\mathbb{Q}}_\ell$ over $(\mathbb{P}^1)^r-(0,\dots,0)$.

Now by assumption $\underline{\lambda}$ is $\sigma$-positive, hence the image of $\overline{\Gamma}$ in $(\mathbb{P}^1)^r$ is contained in $(\mathbb{P}^1)^r-(0,\dots,0)$ which implies that $\mathrm{j}_{\underline{\lambda},!}\mathrm{tr}^*\mathcal{L}_\psi$ is a direct summand of $\tilde{\pi}_*\overline{\mathbb{Q}}_\ell$. Therefore it suffices to show instead that $\overline{\mathrm{p}}_{\underline{\lambda}}$ is universally locally acyclic with respect to $\tilde{\pi}_*\overline{\mathbb{Q}}_\ell$.

Let $K$ be the kernel of $\mathrm{p}_{\underline{\lambda}}$ which we may assume to be connected, let $\overline{K}$ be the normalization of the closure of $K$ in $(\mathbb{P}^1)^r$ and $\overline{\mu}:(\mathbb{G}_m)^r\times\overline{K}\to\overline{\Gamma}$ the morphism which extends the multiplication map from $(\mathbb{G}_m)^r\times K$ to $(\mathbb{G}_m)^r$. Then the diagram $$
\begin{tikzcd}
(\mathbb{G}_m)^r\times\overline{K} \arrow{r}{\overline{\mu}} \arrow{d}[swap]{pr}
& \overline{\Gamma} \arrow{d}{\overline{\mathrm{p}}_{\underline{\lambda}}} \\
(\mathbb{G}_m)^r \arrow{r}[swap]{\mathrm{p}_{\underline{\lambda}}}
& T
\end{tikzcd}
$$ is Cartesian by the theory of toric varieties (see \cite{F93} for example). Let $\hat{\pi}$ be the base change of $\tilde{\pi}$ along $\overline{\mu}$.

We may assume that $\mathrm{p}_{\underline{\lambda}}$ is surjective hence smooth. Since universal local acyclicity is a local property with respect to the smooth topology, it suffices to show that $pr$ is universally locally acyclic with respect to $\hat{\pi}_*\overline{\mathbb{Q}}_\ell$. Then by properness of $\hat{\pi}$ we are further reduced to showing that the composite $pr\circ\hat{\pi}$ is universally locally acyclic with respect to the constant sheaf $\overline{\mathbb{Q}}_\ell$ on the source.

Let $U$ be an open subscheme of $(\mathbb{P}^1)^r$ of the form $U=U_1\times\dots\times U_r$ where each Cartesian factor $U_i$ is an open subscheme of $\mathbb{P}^1$ equal to either $\mathbb{G}_a$ or $\mathbb{P}^1-0$. Let $\widetilde{U}$ be the inverse image of $U$ under $\pi:(\mathbb{P}^1)^r\to(\mathbb{P}^1)^r$ and $\overline{K}_U$ the intersection of $\overline{K}$ with $U$. We will check universal local acyclicity of $pr\circ\hat{\pi}$ by restricting to the open subscheme $\widetilde{U}\times_U((\mathbb{G}_m)^r\times\overline{K}_U)$ of the source of $\hat{\pi}$ and then calculating ``by hand'' with explicit coordinates.

To this end let $t$ be a coordinate on $\mathbb{G}_m$, $X$ a coordinate on $\mathbb{G}_a$ and $Y^{-1}$ a coordinate on $\mathbb{P}^1-0$, hence $(\dots,X_i,\dots,Y_j{}^{-1},\dots)$ are coordinates on $U$. By slight abuse of notation let $\mathrm{p}_{\underline{\lambda}}(\dots,X_i,\dots,Y_j{}^{-1},\dots)=1$ denote a system of polynomial equations such that $$\overline{K}_U\simeq\mathrm{Spec}\Bigg(\overline{\mathrm{Int}}\bigg(\frac{k[\dots,X_i,\dots,Y_j,\dots]}{\big(\mathrm{p}_{\underline{\lambda}}(\dots,X_i,\dots,Y_j{}^{-1},\dots)=1\big)}\bigg)\Bigg)$$ where $\overline{\mathrm{Int}}$ denotes the integral closure, then modulo base change with respect to a normalization, we have that $\widetilde{U}\underset{U}\times\big((\mathbb{G}_m)^r\times\overline{K}_U\big)$ is isomorphic to $$\mathrm{Spec}\left(\frac{k\bigg[\begin{array}{c}\dots,t_n,t_n{}^{-1},\dots,X_i,\dots,Y_j,\dots\\\dots,\widetilde{X}_i,\dots,\widetilde{Y}_j,(1-\widetilde{Y}_j{}^{q-1})^{-1},\dots\end{array}\bigg]}{\bigg(\begin{array}{c}\widetilde{X}_i{}^q-\widetilde{X}_i=t_iX_i,(1-\widetilde{Y}_j{}^{q-1})^{-1}\widetilde{Y}_j{}^q=t_j{}^{-1}Y_j,\\\mathrm{p}_{\underline{\lambda}}(\dots,X_i,\dots,Y_j{}^{-1},\dots)=1\end{array}\bigg)}\right),$$ which by substituting $s_j$ for $(1-\widetilde{Y}_j{}^{q-1})^{-1}t_j$ becomes isomorphic to $$\mathrm{Spec}\left(\frac{k\bigg[\begin{array}{c}\dots,t_i,t_i{}^{-1},\dots,s_j,s_j{}^{-1},\dots\\\dots,\widetilde{X}_i,\dots,\widetilde{Y}_j,(1-\widetilde{Y}_j{}^{q-1})^{-1},\dots\end{array}\bigg]}{\Big(\mathrm{p}_{\underline{\lambda}}\big(\dots,t_i{}^{-1}(\widetilde{X}_i{}^q-\widetilde{X}_i),\dots,\big(s_j\widetilde{Y}_j{}^q\big){}^{-1},\dots\big)=1\Big)}\right).$$ The morphism $\widetilde{X}\mapsto\widetilde{X}^q-\widetilde{X}$ is \'{e}tale and the morphism $\widetilde{Y}\mapsto\widetilde{Y}^q$ is finite surjective radicial, both are universally locally acyclic with respect to $\overline{\mathbb{Q}}_\ell$ on the source. By Corollaire 2.16 in \cite{D77f} the projection $pr$ from $$\mathrm{Spec}\Bigg(\overline{\mathrm{Int}}\bigg(\frac{k[\dots,t_n,t_n{}^{-1},\dots,X_i,\dots,Y_j,\dots]}{\big(\mathrm{p}_{\underline{\lambda}}(\dots,t_i{}^{-1}X_i,\dots,(t_jY_j){}^{-1},\dots)=1\big)}\bigg)\Bigg)\simeq(\mathbb{G}_m)^r\times\overline{K}_U$$ to $(\mathbb{G}_m)^r$ is universally locally acyclic with respect to $\overline{\mathbb{Q}}_\ell$ on the source. Hence so is their composite, the lemma follows.
\end{proof}


\begin{thebibliography}{LABEL}
\addcontentsline{toc}{section}{References}

\bibitem[BBD82]{BBD82} A. Beilinson, J. Bernstein and P. Deligne, \emph{Faisceaux pervers}, in Analysis and topology on singular spaces, I (Luminy 1981), Ast\'{e}risque, 100 (1982), 5-171.

\bibitem[B84]{B84} J. Bernstein, \emph{P-invariant distributions on GL(N) and the classification of unitary representations of GL(N) (nonArchimedean case)}. in Lie group representations, II (College Park, Md., 1982/1983), 50–102, Lecture Notes in Math., 1041, Springer, Berlin, 1984.

\bibitem[BK00]{BK00} A. Braverman and D. Kazhdan, \emph{$\gamma$-functions of representations and lifting} (with an appendix by V. Vologodsky), Geom. Funct. Anal., Special Volume (2002), Part I, 237-278.

\bibitem[BK02]{BK02} A. Braverman and D. Kazhdan, \emph{$\gamma$-sheaves on reductive  groups}, in Studies in memory of Issai Schur (Chevaleret/Rehovot 2000), Progr. Math., 210 (2003), 27-47.

\bibitem[D77a]{D77a} P. Deligne, \emph{Cohomologie \'{e}tale: les points de d\'{e}part}, in Cohomologie \'{e}tale (SGA 4 1/2), Lecture Notes in Math., 569 (1977), 4-75.

\bibitem[D77s]{D77s} P. Deligne, \emph{Applications de la formule des traces aux sommes trigonom\'{e}triques}, in Cohomologie \'{e}tale (SGA 4 1/2), Lecture Notes in Math., 569 (1977), 168-232.

\bibitem[D77f]{D77f} P. Deligne, \emph{Th\'{e}or\`{e}mes de finitude en cohomologie $\ell$-adique}, in Cohomologie \'{e}tale (SGA 4 1/2), Lecture Notes in Math., 569 (1977), 233-261.

\bibitem[F93]{F93} W. Fulton, \emph{Introduction to toric varieties}, Ann. of Math. Stud., 131 (1993).

\bibitem[GJ72]{GJ72} R. Godement and H. Jacquet, \emph{Zeta functions of simple algebras}, Lecture Notes in Math., 260 (1972).

\bibitem[KL85]{KL85} N. Katz and G. Laumon, \emph{Transformation de Fourier et majoration de sommes exponentielles}, Publ. Math. IHES, 62 (1985), 145-202.

\bibitem[K90]{K90} N. Katz, \emph{Exponential sums and differential equations}, Ann. of Math. Stud. 124(1900), 1-430.

\bibitem[L13]{L13} L. Lafforgue, \emph{Noyaux du transfert automorphe de Langlands et formules de Poisson non lin\'{e}aires}, Preprint, 2013, \url{http://www.ihes.fr/~lafforgue/publications.html}.

\bibitem[T50]{T50} J. Tate, \emph{Fourier analysis in number fields, and Hecke's zeta-functions}, in \emph{Algebraic Number Theory}, Academic Press, New York, 1967, 305-347.

\bibitem[S65]{S65} R. Steinberg, \emph{Regular elements of semisimple algebraic groups}, Publ. Math. IHES, 25 (1965), 49-80.

\end{thebibliography}
\end{document}